\newcounter{rulecounter}
\newcommand{\resetrule}{ \setcounter{rulecounter}{0}}
\newsavebox{\selvestebox}
\newenvironment{colbox}[1]
  {\newcommand\colboxcolor{#1}%
   \begin{lrbox}{\selvestebox}%
   \begin{minipage}{\dimexpr\columnwidth-2\fboxsep\relax}}
  {\end{minipage}\end{lrbox}%
   \begin{center}
   \colorbox{\colboxcolor}{\usebox{\selvestebox}}
   \end{center}}
\definecolor{orange}{rgb}{1,0.8,0}
\definecolor{gray}{rgb}{.9,0.9,0.9}
\definecolor{darkgray}{rgb}{.3,0.3,0.3}
\definecolor{darkblue}{rgb}{.1,0.0,0.3}
\definecolor{lightblue}{rgb}{0.7,0.7,1}
\definecolor{lightred}{rgb}{1,0.7,.7}
\definecolor{lightgray}{rgb}{.95,0.95,0.95}
\definecolor{lightgreen}{rgb}{0.8,0.95,0.8}
\newcommand{\brackets}[1]{\left\{#1\right\}}
\newcommand{\bbm}[1]{{\bar{\bm #1}}}
\newcommand{\dbbm}[1]{{\dbar{\bm #1}}}
\newcommand{\dbar}[1]{{\bar{\bar{ #1}}}}
\newcommand{\tbm}[1]{{\tilde{\bm #1}}}
\newcommand{\ffield}{\mathbb{F}}
\newcommand{\cfield}{\mathbb{C}}
\newcommand{\rfield}{\mathbb{R}}
\newcommand{\rank}{\mathop{\rm rank}}
\newcommand{\dimf}{\mathop{\rm dim}_\ffield}
\newcommand{\dimr}{\mathop{\rm dim}_\rfield}
\newcommand{\spanv}{\mathop{\rm span}}
\newcommand{\spanvf}{\spanv_{\ffield}}
\newcommand{\spanvr}{\spanv_{\rfield}}
\newcommand{\spanvc}{\spanv_{\cfield}}
\newcommand{\vecv}{\mathop{\rm vec}}
\newcommand{\expected}[1]{\mathop{\textrm{E}}\brackets{#1} }
\newcommand{\st}{\mathop{\text{s.t.}}}
\newtheorem{myproposition}{Proposition}
\newtheorem{myremark}{Remark}
\newtheorem{mylemma}{Lemma}
\newtheorem{mytheorem}{Theorem}
\newtheorem{mydefinition}{Definition}
\newtheorem{mycorollary}{Corollary}
\newcommand{\hcolor}[1]{{{#1}}}
\newcommand{\ulen}{\hcolor{L}} 
\newcommand{\uind}{\hcolor{l}} 
\newcommand{\ulenind}{\hcolor{l}} 
\newcommand{\sulen}{{\bar \ulen}} 
\newcommand{\clen}{\hcolor{K}} 
\newcommand{\cind}{\hcolor{k}} 
\newcommand{\nblocks}{\hcolor{B}} 
\newcommand{\blockind}{\hcolor{b}}  
\newcommand{\blen}{N}  
\newcommand{\bsamps}{M} 
\newcommand{\sndiag}{\hcolor{d}}   
\newcommand{\spat}{\hcolor{\mathcal{K}}}  
\newcommand{\spatel}{{\hcolor{k}}}  
\newcommand{\bspat}{\hcolor{\mathcal{M}}}  
\newcommand{\bspatel}{{\hcolor{m}}}  
\newcommand{\dif}{\hcolor{\delta}} 
\newcommand{\basis}{\mathcal{S}}  
\newcommand{\tbasismatr}{\hcolor{\bm T}}  
\newcommand{\tbasismati}{\hcolor{\tbm T}}  
\newcommand{\cbasis}{\bar{\mathcal{S}}} 
\newcommand{\nbasis}{\hcolor{S}}  
\newcommand{\basisind}{{\hcolor{s}}} 
\newcommand{\basisst}{\mathcal{S}_{T}} 
\newcommand{\basissb}{\mathcal{S}_{B}^\sndiag} 
\newcommand{\basissc}{\mathcal{S}_{C}} 
\newcommand{\lsr}{\hcolor{\mathcal{K}}}   
\newcommand{\lsrm}{\hcolor{L}}       
\newcommand{\lsrnel}{\hcolor{K}}       
\newcommand{\lsrel}{\hcolor{k}}       
\newcommand{\plen}{\hcolor{N}}  
\newcommand{\csr}{\hcolor{\mathcal{K}}} 
\newcommand{\csrm}{\hcolor{L}}  
\newcommand{\csrel}{\hcolor{k}}       
\newcommand{\sidsulen}{\bm J_{\ulen}}  
\newcommand{\sidblen}{\bm J_{\blen}}  
\newcommand{\idzbsamps}{\bm F_{\bsamps}} 
\newcommand{\zvl}{\bm 0_{\sulen}}
\newcommand{\bx}{\bm x}
\newcommand{\by}{\bm y}
\newcommand{\bphi}{\bm \Phi}
\newcommand{\bsig}{\bm \Sigma} 
\newcommand{\bbsig}{\bbm \Sigma} 
\newcommand{\bbphi}{\bar{\bm \Phi}}
\newcommand{\phic}{\phi_\cfield}
\newcommand{\tphis}{\tilde \phi_\cfield}
\newcommand{\phir}{\phi_{|\aset}}
\newcommand{\vecdots}{\cdots}
\newcommand{\intset}[2]{ \{#1,\ldots,#2\} }
 \newcommand{\sspace}{\mathbb{S}^{\ulen}}
\newcommand{\acv}{\bm \sigma}
\newcommand{\rdel}{\tilde{\acv}}
\newcommand{\rhors}{\rho_\text{DS}}
\newcommand{\rhons}{\rho_\text{SS}}
\newcommand{\defmod}{{\hcolor{A}}}
\newcommand{\aelo}{{\hcolor{p}}} 
\newcommand{\aelt}{{\hcolor{q}}} 
\newcommand{\asindo}{{\hcolor{1}}} 
\newcommand{\asindt}{{\hcolor{2}}} 
\newcommand{\eij}{\bm E_{i,j}}
 \newcommand{\nset}{\mathbb{N}}
 \newcommand{\zset}{\mathbb{Z}}
\newcommand{\wset}{\mathcal{W}}
\newcommand{\bbv}{\bbm V}
\newcommand{\bV}{\bm V}
\newcommand{\bv}{\bm v}
\newcommand{\bbw}{\bbm W}
\newcommand{\dbbw}{\dbbm W}
 \newcommand{\aset}{\mathcal{A}}
\newcommand{\bset}{\mathcal{B}}
\newcommand{\bspace}{\mathbb{B}^{N,\nblocks}}
\newcommand{\tspace}{\mathbb{T}^{N\nblocks}}
\newcommand{\vindex}[1]{_{#1}}  
\newcommand{\bindex}[1]{[{#1}]}  
\newcommand{\ssindex}[1]{_{#1}}  
\newcommand{\sindex}[1]{}  
\newcommand{\change}[1]{{#1}}
\newcommand{\removenow}[1]{}  
\begin{document}


\title{ Compression Limits for Random Vectors with Linearly
  Parameterized Second-Order Statistics} \author{Daniel
  Romero,~\IEEEmembership{Student Member,~IEEE,} Roberto
  L\'opez-Valcarce,\\~\IEEEmembership{Member,~IEEE} and Geert
  Leus,~\IEEEmembership{Fellow,~IEEE.}
\thanks{Daniel Romero and Roberto L\'opez-Valcarce are with the
  Department of Signal Theory and Communications, University of Vigo,
  36310 Vigo, Spain.  Geert Leus is with the Faculty of EEMCS, Delft
  University of Technology, Mekelweg 4, 2628 CD Delft, The
  Netherlands.  (Email: \{dromero,valcarce\}@gts.uvigo.es,
  g.j.t.leus@tudelft.nl)}

\thanks{ This work was partially funded by the Spanish Government and
  the European Regional Development Fund (ERDF) under projects
  TACTICA, COMONSENS (CSD2008-00010) and COMPASS
  (TEC2013-47020-C2-1-R) and FPU grant AP2010-0149, and by the
  Galician Regional Government and ERDF under projects "Consolidation
  of Research Units'' (GRC2013/009), REdTEIC (R2014/037) and
  AtlantTIC. This work is further supported by NWO-STW under the VICI
  program (project 10382).  Parts of this work have been presented at
  the 2013 Inform. Theory Appl. Workshop, San Diego, California.  }
\thanks{ Copyright (c) 2014 IEEE. Personal use of this material is
  permitted.  However, permission to use this material for any other
  purposes must be obtained from the IEEE by sending a request to
  pubs-permissions@ieee.org.  }
}

\maketitle

\begin{abstract}
The class of complex random vectors whose  co\-va\-riance matrix is
linearly parameterized by a basis of Hermitian Toeplitz (HT) matrices
is considered, and the maximum  com\-pre\-ssion ratios that preserve
all second-order information are derived --- the statistics of the
uncompressed vector must be recoverable from a set of linearly
compressed observations. This kind of vectors arises naturally when
sampling wide-sense stationary random processes and features a number
of applications in signal and array processing.

Explicit guidelines to design optimal and nearly optimal schemes
operating both in a periodic and non-periodic fashion are
provided by considering two of the most common linear compression
schemes, which we classify as dense or sparse. It is seen that the
maximum compression ratios depend on the structure of the HT subspace
containing the covariance matrix of the uncompressed
observations. Compression patterns attaining these maximum ratios are
found for the case without structure as well as for the cases with
circulant or banded structure. Universal samplers are also proposed to
compress unknown HT subspaces.



\end{abstract}

\begin{IEEEkeywords}
Compressive Covariance Sensing, Covariance Matching, Compression
Matrix Design.
\end{IEEEkeywords}


~\\[-1.2cm]
\section{Preliminaries}
\label{sec:intro}

Consider the problem of estimating the second-order statistics of a
zero-mean random vector $\bx \in \cfield^{\ulen}$ from  a set of
$\clen$ linear observations collected in the vector $\by\in
\cfield^{\clen}$ given by
\begin{align}
\label{eq:samp}
\by = \bbphi\bx,
\end{align}
where $\bbphi\in \cfield^{\clen\times \ulen}$ is a known matrix and
several realizations of $\by$ may be available. This matrix may be
referred to as the \emph{compression matrix}, \emph{measurement
  matrix} or \emph{sampler}, where compression is achieved by setting
$\clen < \ulen$ (typically $\clen\ll\ulen$).  The covariance matrix
$\bsig = \expected{\bx \bx^H}$ contains the second-order statistics of
$\bx$ and is assumed to be a linear combination of the Hermitian
Toeplitz (HT) matrices in a given set $\basis=\{\bsig_0,\bsig_1
,\cdots, \bsig_{\nbasis-1}\}\subset \cfield^{\ulen\times \ulen}$, that
is, there exist some scalars $\alpha_\basisind$ such that
$\bsig=\sum_{\basisind} \alpha_\basisind\bsig_\basisind$.


This problem arises in inference operations over the second-order
statistics of a random vector with a Toeplitz covariance
matrix. Operating on the compressed observations $\by$ entails
multiple advantages due to their smaller dimension. In fact, many
research efforts in the last decades
have been pointed towards designing compression methods 
and reconstruction algorithms that allow
for sampling rate reductions. While most  efforts have been
focused on reconstructing $\bx$, there were also important advances
when only the second-order statistics of this vector are of
interest. This paper is concerned with problems of the second kind.


The \emph{compression ratio} $\rho = \ulen/\clen$ measures how much
$\bx$ is compressed.  \change{ The maximum compression ratio remains
  an open problem in many cases of interest; and most existing results
  rely on the usage of specific reconstruction algorithms (see
  Sec.~\ref{sec:rwc}).} This paper presents a general and unifying
framework built on abstract criteria where the maximum compression
ratio is defined and computed for most relevant settings. The proofs
involved in this theory are constructive, resulting in several methods
for designing optimal compression matrices.

~\\[-1.2cm]
\subsection{ Covariance Matching Formulation}
\label{sec:ccs}



The prior information restricts the structure of $\bsig$, thus
determining how much $\bx$ can be compressed. When no information at
all is available, $\bsig$ is simply constrained to be Hermitian
positive semidefinite and no compression is possible. However, if
$\bx$ contains samples from a wide-sense stationary process, the fact
that $\bsig$ is HT and positive semidefinite allows for a certain
degree of compression. More generally, $\bsig$ may be assumed to lie
in the intersection of the cone of positive semidefinite matrices and
the subspace spanned by a set of HT matrices (not necessarily positive
semidefinite)
$\basis=\{\bsig_0,\bsig_1,\cdots,\bsig_{\nbasis-1}\}\subset
\cfield^{\ulen\times \ulen}.$ This subspace, throughout referred to as
the \emph{covariance subspace}, captures the prior information
available and, intuitively, the smaller its dimension, the higher the
compression that can be reached.

Without any loss of generality, we consider  real  scalars:
\begin{align}
\label{eq:bsigdef}
\bsig = \sum_{\basisind=0}^{\nbasis-1} \alpha_\basisind\bsig_\basisind,~~\text{with}~~\alpha_\basisind \in \rfield,
\end{align}
and $\basis$ is assumed to be a linearly independent set of matrices:
\begin{align}
\sum_{\basisind=0}^{\nbasis-1} \alpha_\basisind \bsig_\basisind = \sum_{\basisind=0}^{\nbasis-1}
\beta_\basisind\bsig_\basisind~~\Rightarrow~~\alpha_\basisind = \beta_\basisind~\forall \basisind.
\end{align}
Thus, $\basis$ is a basis for the covariance subspace, which means
that the decomposition in \eqref{eq:bsigdef} is unique and,
consequently, knowing the $\alpha_\basisind$'s is equivalent to
knowing $\bsig$. Since the coefficients are real-valued and the
matrices HT, it is necessary that $\nbasis\leq 2 \ulen-1$ in order for
$\basis$ to be linearly independent. The second-order statistics of
$\by$, arranged in $\bbsig = \expected{\by \by^H}$, and those of
$\bx$, arranged in $\bsig$, are related by:
\begin{align}
\label{eq:decbbsig}
 \bbsig = \bbphi \bsig \bbphi^H = \sum_{\basisind=0}^{\nbasis-1} \alpha_\basisind
 \bbsig_\basisind,~~\text{where}~~\bbsig_\basisind = \bbphi \bsig_\basisind \bbphi^H.
\end{align}
 In other words, the expansion coefficients of $\bsig$ with respect to
 $\basis$ are those of $\bbsig$ with respect to $
 \cbasis=\{\bbsig_0,\bbsig_1,\cdots,\bbsig_{\nbasis-1}\}\subset
 \cfield^{\clen\times\clen}.$ Albeit Hermitian, the matrices in
 $\cbasis$ are not Toeplitz in general.  If the compression operation
 preserves all relevant information, then $\cbasis$ is linearly
 independent and knowing $\bbsig$ is equivalent to knowing the
 $\alpha_\basisind$'s, which in turn amounts to knowing
 $\bsig$. Conversely, if the compression is so strong that the linear
 independence is lost, then some second-order information about $\bx$
 cannot be recovered.


This paper unifies the treatment of a number of problems arising in
different applications (see Sec.~\ref{sec:apps}) by noting that they
can be stated as the estimation of a linearly parameterized covariance
matrix $\bbsig$ from the compressed observations $\by$, that is, they
admit a \emph{covariance matching}
formulation~\cite{ottersten1998matching,burg1982structured}. For
simplicity, a linear parameterization such as the one in
\eqref{eq:decbbsig} is assumed, but the results still apply to certain
non-linear parameterizations~\cite{ottersten1998matching} (see the
discussion around \thref{prop:aset}).

~\\[-1.2cm]
\subsection{ \change{Signal Acquisition}}
\label{sec:cs}

\change{Compression is particularly convenient in the acquisition
  stage since otherwise part of the resources would be devoted to
  acquire data that is afterwards discarded. For this reason, the
  literature contains many compressive acquisition and reconstruction
  procedures. Remarkable examples are sub-Nyquist sampling of
  multiband/multitone
  \cite{lin1998nonuniform,herley1999arbitrary,venkataramani2000multiband,mishali2010practice,tropp2010beyond}
  signals, compressed
  sensing~\cite{donoho2006compressed,candes2008introduction}, and array
  design for aperture synthesis
  imaging~\cite{hoctor1990coarray,moffet1968mra,pillai1985improved}. They
  differ as to which structure is assumed for the data and which
  information is deemed important.  }

\change{Most consider reconstructing a signal $\bx$ from linearly
  compressed observations $\by = \bbphi \bx$. Although this procedure
  is, in principle, possible when the goal is to estimate the
  second-order statistics of $\bx$, saving the intermediate step of
  reconstructing $\bx$ may entail computational advantages and greater
  compression ratios. This problem will be globally referred to as
  \emph{compressive covariance sampling}~(CCS). 
}

\change{These approaches (including CCS) share similar compression
  structures, classified here according to the nature of~$\bbphi$:}
\begin{itemize}
\item \change{ \emph{Sparse samplers} are those where $\bbphi$ is a
  sparse matrix. Commonly, $\bbphi$ is composed of $\clen$ different
  rows of the identity matrix $\bm I_\ulen$, thus performing a
  \emph{component selection} of $\bx$. If this selection is periodic,
  it is known as \emph{multi-coset sampling} (see Secs.~\ref{sec:trp}
  and \ref{sec:ucs:nus}).}
\item\change{ \emph{Dense samplers} are those where $\bbphi$ is a
  dense matrix.  Each component of $\by$ is therefore a\emph{ linear
    combination }of the components of $\bx$.  In the case of periodic
  samplers, $\bbphi$ is block diagonal where all diagonal blocks are
  replicas of a certain dense matrix (see Sec.~\ref{sec:trp}).}
\end{itemize}

\change{The nature of the acquisition architecture depends on the domain where
the signal of interest is defined:}
\begin{itemize}
\item\change{ \emph{Time-domain signals:} several alternatives have
  been proposed to replace \emph{analog-to-digital converters} (ADCs),
  which are known to be slow, expensive and power-hungry. Some
  examples include interleaved ADCs \cite{black1980interleaved},
  non-uniform sampling and its generalizations
  \cite{herley1999arbitrary,venkataramani2000multiband,wakin2012nonuniform},
  the random demodulator \cite{laska2007aic,tropp2010beyond}, the
  modulated wideband converter \cite{mishali2010practice} and the
  random modulator
  pre-integrator~\cite{yoo2012implementation,becker2011thesis}. We
  will globally refer to these devices as \emph{compressive-ADCs}
  (C-ADCs). Their operation is described by \eqref{eq:samp} when $\bx$
  contains the Nyquist samples of the signal of interest, which are
  not physically acquired but can be used as a convenient mathematical
  abstraction.}

\item\change{ \emph{Space-domain signals:} Compression is accomplished
  using \eqref{eq:samp}, where $\bx$ is a snapshot of the
  \emph{uncompressed array}. With sparse sampling (see
  e.g.~~\cite{hoctor1990coarray,moffet1968mra,pearson1990nearoptimal,wild1987difference,pillai1985improved,pillai1987augmented,abramovich1998completion1,abramovich1999completion2,pal2010nested}),
  only the antennas corresponding to the non-null columns of $\bbphi$
  need to be physically deployed to obtain $\by$, whereas in dense
  sampling~\cite{wang2009compressivearray,wang2010spacetime,venkateswaran2010beamforming},
  analog combiners are used to reduce the number of radio frequency
  chains.}
\end{itemize}

~\\[-1.2cm]
\subsection{Applications of CCS}
\label{sec:apps}
\change{We show how CCS can be applied to several problems that can be
  formulated using covariance matching models. These models need not
  be used for
  estimation~\cite{ottersten1998matching,burg1982structured}; they are
  simply used to capture the information to be preserved. }
\change{The most common covariance subspaces, defined in
  Sec.~\ref{sec:rcs}, are the \emph{Toeplitz subspace}, the
  \emph{circulant subspace} and the \emph{$\sndiag$-banded subspace}.
  Sparse and dense samplers have been considered in most applications,
  either in a periodic or non-periodic fashion.
 }
~\\[-.3cm]
\subsubsection{\change{Compressive Power Spectrum Estimation}} 
\change{The goal is to estimate $\bsig$ from $\by$ with the only
  constraint that it must be HT and positive semidefinite, which means
  that the covariance subspace is the Toeplitz subspace. One can
  employ any basis for this subspace, reconstruct $\bsig$ and apply a
  Fourier transform to find the power spectrum. More directly, one can
  consider the Fourier basis (see \eqref{eq:bffourier} below) where
  the coordinate $\alpha_\basisind$ in \eqref{eq:bsigdef} will
  represent the value of the power spectrum at frequency ${2\pi
    s}/({2\ulen-1})$. Assuming bounded autocorrelation supports
  enables $\sndiag$-banded subspaces~\cite{ariananda2012psd}, whereas
  a frequency domain formulation results in circulant 
  subspaces~\cite{yen2013subnyquist,lexa2011psd}.  }



\subsubsection{\change{Wideband Spectrum Sensing}}

\change{If $\bx = \sum_\basisind\sigma_\basisind
  \bx\ssindex{\basisind}$, where $ \bx\ssindex{\basisind}$ corresponds
  to a signal whose second-order statistics are known up to a scale,
  the parameters $\sigma_\basisind$ capturing the power of each
  component can be estimated based on the observations provided by a
  C-ADC~\cite{gonzalo2010compressed,romero2013wideband,romero2015cartography,romero2015online}. The
  covariance subspace is the span of the set of covariance matrices of
  the $ \bx\ssindex{\basisind}$'s.  }




\subsubsection{\change{Incoherent Imaging}} \change{Arbitrary distributions
of uncorrelated sources in the far field of a uniform linear array
(the uncompressed array) produce HT spatial covariance matrices.  The
angular spectrum can be obtained as the coefficients
$\alpha_\basisind$ in the expansion  \eqref{eq:bffourier} (see
\cite{hoctor1990coarray}), which correspond to the intensity impinging
from $2\ulen-1$ looking directions. Recent formulations have also
considered circulant
subspaces~\cite{krieger2013multicosetarrays,ariananda2013periodogram}.}

\subsubsection{\change{Sparse Spectrum Estimation}} \change{modal
  analysis can be used to identify the components of a \emph{sum of
    sinusoids} in noise (time-domain
  signals)~\cite{pal2011coprime,vaidyanathan2011coprime} or to
  estimate the \emph{direction of arrival} (DoA) of a number of point
  sources in the far field (space-domain
  signals)~\cite{pillai1985improved,pillai1987augmented,abramovich1998completion1,abramovich1999completion2,pal2010nested,shakeri2012sparseruler}
  using the compressed observations $\by$.  $\bsig$ is expanded as
  $\bsig=\sum_{\basisind=0}^{R-1}\alpha_\basisind\bm
  v(\phi_\basisind)\bm v^H(\phi_\basisind)$, where $R$ is the number
  of sinusoids/sources and $\bm v(\phi_\basisind)$ corresponds either
  to the sinusoid with frequency $\phi_\basisind$ or to the source at
  angle $\phi_\basisind$.  If $\bx$ is uniformly sampled, then $\bm
  v(\phi_\basisind)\bm v^H(\phi_\basisind)$ is Toeplitz. Since the
  angles $\phi_\basisind$ are unknown, the only structure present in
  $\bsig$ is that it is HT and positive
  semidefinite~\cite{pillai1985improved}. Therefore, $\bbphi$ must
  preserve the structure of \emph{any} Toeplitz matrix. An equivalent
  approach uses \emph{universal samplers} (see Sec.~\ref{sec:th}).}




~\\[-1.2cm]
\subsection{Related Work and Contributions} 
\label{sec:rwc}


\change{ Most works on reconstructing second-order statistics from
  compressed measurements deal with estimating Toeplitz covariance
  matrices using non-periodic sparse samplers, where the observation
  is that at least a pair of samples at each possible distance is
  required to estimate the statistics of the uncompressed
  signal~\cite{moffet1968mra,hoctor1990coarray,pillai1985improved,pal2010nested,pal2011coprime}.
  The optimal solution, termed \emph{restricted minimum redundancy
    array} or \emph{minimal sparse ruler}, was analyzed in
  \cite{redei1949representation,leech1956differences,wichmann1963difference,wild1987difference,pearson1990nearoptimal,linebarger1993sparsearrays}
  and shown to be optimal in direction
  finding~\cite{pillai1985improved}. Suboptimal, yet more structured,
  schemes were proposed
  in~\cite{wichmann1963difference,pearson1990nearoptimal,linebarger1993sparsearrays,pumphrey1993sparsearrays,pal2010nested,pal2012twodimensions1}.
}

\change{ Periodic sampling in $\sndiag$-banded subspaces was
  considered in~\cite{ariananda2012psd}, where the maximum $\rho$ was
  bounded using the conditions for unique reconstruction of a least
  squares algorithm. Suboptimal compression schemes were proposed
  in~\cite{ariananda2012psd}
  and~\cite{dominguezjimenez2013circular}. Non-periodic sparse
  sampling in circulant subspaces was considered in
\cite{yen2013subnyquist} and  \cite{ariananda2013periodogram}, where
  optimal and suboptimal designs are respectively found based on
  specific algorithms.\footnote{The initial statement in
    \cite{yen2013subnyquist,ariananda2013periodogram} uses periodic
    sampling, but their considerations in the frequency domain lead to
    non-periodic sampling.}
}

\change{
The sampler design criteria used in most of these works are tailored to
specific reconstruction algorithms.  Furthermore, their formulation is
not general enough to accommodate periodic samplers, dense samplers or
prior information. The contributions of this paper can be summarized as
follows:}
\change{
\begin{itemize}
\item We present a formal and general framework, irrespective of any
  algorithm, that establishes the conditions for a compression pattern
  to be \emph{admissible} and defines the maximum compression ratio
  based on abstract criteria.
\item Optimal sparse and dense samplers are found for most cases of
  interest. Novel designs include (non-)periodic sparse samplers for
  circulant and banded subspaces, periodic sparse samplers for
  Toeplitz subspaces and (non-)periodic dense samplers for Toeplitz,
  circulant and banded subspaces.
\item The notion of \emph{universal sampler} is proposed as the one
  preserving all second-order information for any HT
  covariance subspace.
\item We provide simple tools to assess admissibility in all linear
  and certain non-linear cases. Particularly, we show that the
  positive semidefinite nature of covariance matrices does not
  generally allow greater compression ratios. 
\end{itemize}
}

~\\[-1.4cm]
\subsection{Notation}
\label{sec:not}

 If a set $\aset$ is finite, then $|\aset|$ denotes its cardinality.
 If $\ffield$ is a field, then the $\ffield$-span of a set of matrices
 $\aset$ is defined as $\spanvf \aset = \{ \bm A \in \cfield^{P \times
   P}:\bm A= \sum_\basisind \alpha_\basisind \bm A_\basisind, ~\bm
 A_\basisind \in \aset,~\alpha_\basisind\in \ffield \}$. The
 $\ffield$-dimension of a set $\bset$, denoted as $\dimf \bset$, is
 the smallest $n\in \nset$ such that there exists some $\aset$ with
 $|\aset| = n$ such that $\bset \subset \spanvf \aset$. The image of a
 set $\aset$ through a function $\phi$ is denoted as $\phi(\aset)$.

Lowercase is used for scalars, bold lowercase
for vectors and bold capital for matrices. Superscript $^T$ stands for
transpose, $^H$ for conjugate transpose and $\otimes$ represents the
Kronecker product~\cite{bernstein2009}. The $(i,j)$ entry of the
$P\times Q$ matrix $\bm A$ is $a_{i,j}$, where we start with index
zero (that is, the top-left entry is $a_{0,0}$).
The vectorization of $\bm A$ is the vector $\vecv\{ \bm A\} = [ \bm
  a_0^T,\cdots,\bm a_{Q-1}^T]^T$, where $\bm a_j =
[a_{0,j},\cdots,a_{P-1,j}]^T$.  The 
$\sndiag$-th diagonal refers to the entries $(i,j)$ with
$j-i=\sndiag$, where $\sndiag$ is a negative, null or positive
integer.  $\eij$ is a matrix with all zeros except for a 1 at the position
$(i,j)$ and it is represented as $\bm e_i$ if it has a single column.

The symbol $\jmath$ denotes the imaginary unit and 
$(x)_N$  is the remainder of the integer division of $x\in
\zset$ by $N$, i.e., $(x)_N$ is the only element in the set $\{x +
\blockind N,~\blockind\in\zset \}\cap \{0,\ldots,N-1 \}$.

~\\[-1.2cm]
\subsection{Paper Structure}
The rest of the paper is structured as follows. Sec.~\ref{sec:th} sets
the theoretical background, where maximum compression ratios and
covariance samplers are defined. Sec.~\ref{sec:dcs} presents some
results to design covariance samplers, which are applied in
Secs.~\ref{sec:ucs} and~\ref{sec:nucs} to design universal and
non-universal covariance samplers, respectively. Asymptotic
compression ratios are discussed in Sec.~\ref{sec:acr}, whereas some
remarks and conclusions are respectively provided in
Secs.~\ref{sec:dis} and \ref{sec:con}.

\section{Theoretical Framework}
\label{sec:th}

The definition of the maximum compression ratio requires to first
decide which samplers we are willing to
accept. As explained in Sec.~\ref{sec:intro}, we are interested in
those samplers preserving all the second-order statistical information
of $\bx$, i.e., those samplers that allow to recover the statistics of
$\bx$ from the statistics of $\by$.  In order to formalize this
notion, let us start by associating the compression matrix
$\bbphi \in \cfield^{\clen\times \ulen}$ with a linear function that
relates the covariance matrices of $\bx$ and $\by$ and which is
defined as
   \begin{align}
     \label{eq:phidef}
     \begin{array}{ccccc}
\displaystyle   \spanvr \basis & \xrightarrow{~~\phi~~}
&\displaystyle  \spanvr \cbasis \\
  \bsig &\xrightarrow{~~~~~~} &\phi( \bsig) = \bbphi \bsig \bbphi^H
     \end{array}
   \end{align}
where, recall, $\basis$ is a linearly independent set of $\nbasis$ HT
matrices.\footnote{For mathematical convenience, $\phi$ is not only
  defined for positive semidefinite matrices.}  We next specify which
sampling matrices are \emph{admissible}:




\begin{mydefinition}
\thlabel{def:covs} \emph{ A matrix $\bbphi$ defines an
  \-$\basis$-covarian\-ce sampler\footnote{When the set $\basis$ is
    clear from the context, we will simply say that $\bbphi$ defines a
    \emph{covariance sampler}.}
  if the associated function $\phi$, defined in \eqref{eq:phidef}, is
  invertible.  }
\end{mydefinition}

  The maximum compression ratio is the largest value of $\ulen/\clen$ for
  which a covariance sampler $\bbphi\in\cfield^{\clen\times \ulen}$ can be
  found.  Above this value, it is not possible to consistently
  estimate the second-order statistics of $\bx$, even from an
  arbitrarily large number of realizations of $\by$, since the
  statistical identifiability\footnote{See
    \cite{romero2013covariancesampling} for a  discussion on the
    statistical identifiability in CCS.} of
  $\bsig$ is lost~\cite{lehmann}. For convenience, we will regard $\ulen$
  as given and attempt to minimize $\clen$.

\change{
One may argue that the requirement in \thref{def:covs} is too strong
since it suffices to require $\phi$ to be invertible only for those
matrices in $\spanvr \basis$ that are positive semidefinite.  More
generally, the prior information may constrain $\bsig$ to be in a
certain non-linear set $\aset$ such as the set of positive
semidefinite matrices, the set of covariance matrices of
auto-regressive processes with a given order, the non-linear sets
in~\cite{ottersten1998matching}, etc. In that case, we may
reformulate \thref{def:covs} to require $\phi$ to be invertible only
in $\aset \cap \spanvr \basis$. However, this is unnecessary, as shown
next:
}
\begin{mylemma}
\thlabel{prop:aset}
\emph{
Let $\phi$ be the function defined in \eqref{eq:phidef}, where $\basis$
is an independent set of $\nbasis$ HT matrices, let $\aset$ be
a set of matrices such that $\dimr[\aset \cap \spanvr
  \basis]=\nbasis$ and let $\phir$ be the
restriction of $\phi$ to $\aset \cap \spanvr \basis$, defined as:
   \begin{align}
     \begin{array}{ccccc}
\displaystyle~~~~~  \aset \cap \spanvr \basis & \xrightarrow{~~\phir~~}
&\displaystyle  \phi( \aset \cap \spanvr \basis)  \\
  \bsig &\xrightarrow{~~~~~~} &\phir( \bsig) = \phi( \bsig ).
     \end{array}
   \end{align}
Then, $\phi$ is invertible if and only if $\phir$ is
invertible.
}
\end{mylemma}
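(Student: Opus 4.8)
The plan is to strip the statement down to a question about the kernel of a linear map and then treat the two implications separately, the converse being the one that carries the content.

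First I would record that $\phi$ is $\rfield$-linear and, by its very definition in \eqref{eq:phidef}, surjective onto its codomain $\spanvr\cbasis$, which is literally the image $\phi(\spanvr\basis)$. Hence $\phi$ is invertible if and only if it is injective, i.e. if and only if $\ker\phi=\{\bm 0\}$ (equivalently, $\cbasis$ is a linearly independent set). The same remark applies verbatim to $\phir$, whose codomain is defined to be the exact image $\phi(\aset\cap\spanvr\basis)$, so that $\phir$ is invertible precisely when it is injective on $\aset\cap\spanvr\basis$. This reframing collapses the whole lemma into the single assertion: $\phi$ is injective on $\spanvr\basis$ if and only if it is injective on $\aset\cap\spanvr\basis$. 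The forward implication is then immediate, since the restriction of an injective map to any subset is injective and $\phir$ is surjective onto its image by construction.

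For the converse I would argue by contraposition. Suppose $\phi$ is not invertible, so there is a nonzero $\bm M_0\in\spanvr\basis$ with $\phi(\bm M_0)=\bm 0$. The goal is to produce two distinct matrices in $\aset\cap\spanvr\basis$ with the same image, which violates injectivity of $\phir$. This is where the hypothesis $\dimr[\aset\cap\spanvr\basis]=\nbasis=\dimr\spanvr\basis$ enters: it forces $\aset\cap\spanvr\basis$ to span all of $\spanvr\basis$, and, combined with the convexity of the relevant $\aset$ (the positive-semidefinite cone intersected with the subspace is a full-dimensional convex cone), it guarantees a point $\bm A_0$ in the relative interior of $\aset\cap\spanvr\basis$ within $\spanvr\basis$. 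Then, for every sufficiently small $t\neq 0$, the perturbed matrix $\bm A_0+t\bm M_0$ still lies in $\aset\cap\spanvr\basis$, is distinct from $\bm A_0$, and satisfies $\phi(\bm A_0+t\bm M_0)=\phi(\bm A_0)+t\,\phi(\bm M_0)=\phi(\bm A_0)$, contradicting the injectivity of $\phir$.

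The main obstacle is precisely the step that upgrades the dimension hypothesis into an interior point. Full $\rfield$-dimension of an \emph{arbitrary} set does not suffice: a one-dimensional curve can $\rfield$-span a plane while still mapping injectively onto a line, so the bare condition $\dimr[\aset\cap\spanvr\basis]=\nbasis$ cannot on its own force the equivalence. What rescues the argument is the convex-cone structure of the physically meaningful $\aset$, for which full dimension is equivalent to nonempty relative interior. I would therefore make the mild regularity requirement that $\aset\cap\spanvr\basis$ be convex explicit (it is automatically so for the semidefinite cone motivating \thref{prop:aset}), after which the perturbation argument closes the proof.
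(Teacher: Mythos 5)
Your proof is correct for the intended $\aset$ and takes a genuinely different route from the paper's. The paper argues directly: using $\dimr[\aset\cap\spanvr\basis]=\nbasis$ it selects $\nbasis$ linearly independent matrices $\bsig_{\bm\alpha_i}$ of $\spanvr\basis$ lying in $\aset$, asserts that injectivity of $\phir$ forces the images $\bbsig_{\bm\alpha_i}$ to remain linearly independent, and then expands arbitrary $\bsig_{\bm a},\bsig_{\bm b}$ in this basis to conclude that $\phi$ is injective. You instead argue by contraposition, taking a nonzero $\bm M_0\in\ker\phi$ and perturbing a relative-interior point of $\aset\cap\spanvr\basis$ along $\bm M_0$. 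Your version is the more honest about where the hypothesis on $\aset$ is actually used: the paper's key step --- ``since $\phir$ is injective and $\{\bsig_{\bm\alpha_i}\}$ is linearly independent, the $\bbsig_{\bm\alpha_i}$ form an independent set'' --- does not follow from set-theoretic injectivity of $\phir$ alone, for exactly the reason you give (a curve can $\rfield$-span a plane while mapping injectively onto a line, so the bare dimension condition is insufficient and the lemma as literally stated for an arbitrary $\aset$ is false). That step is rescued only by the conic structure of the intended $\aset$: if $\sum_i c_i\bbsig_{\bm\alpha_i}=\bm 0$ nontrivially, splitting into positive- and negative-coefficient parts produces two distinct positive semidefinite matrices in $\spanvr\basis$ with equal image, contradicting injectivity of $\phir$. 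So the extra regularity you make explicit is not a weakening of the lemma but a condition the paper leaves implicit, and both proofs need it. One small caution on your own argument: convexity plus full \emph{linear} span does not by itself yield a relative-interior point inside $\spanvr\basis$ (an affine hyperplane missing the origin linearly spans the whole space yet has empty interior); you need the cone property, or equivalently that the affine hull of $\aset\cap\spanvr\basis$ coincide with its linear hull, which the positive semidefinite cone satisfies since it contains $\bm 0$. With that stated, your perturbation argument closes the proof.
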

\begin{IEEEproof}
See Appendix~\ref{sec:paset}.
\end{IEEEproof}

\change{ Therefore, the non-linear information collected in $\aset$ is
  irrelevant from the linear compression perspective whenever
  $\dimr[\aset \cap \spanvr \basis]=\nbasis$. If this condition is not
  satisfied, one must choose a different basis $\basis'$ such that
  $\aset \cap \spanvr \basis' = \aset \cap \spanvr \basis$ and
  $\dimr[\aset \cap \spanvr \basis']=|\basis'|$, which is always
  possible.  This establishes the generality of \thref{def:covs} and
  enables us to work with covariance subspaces without further
  concerns.  }

\change{
If $\aset$ is the cone of positive semidefinite matrices, then
$\basis$ satisfies  $\dimr[\aset \cap \spanvr \basis]=\nbasis$ in most
cases of interest:
}
\change{
\begin{mylemma}
\thlabel{prop:psdm} \emph{ Let $\aset$ be the set of positive
  semidefinite matrices. Then $\dimr[\aset \cap \spanvr
    \basis]=\nbasis$ if at least one of the following conditions
  holds:
\begin{enumerate}
\item $\bsig\geq \bm 0$ for all $\bsig\in \basis$ 
\item $\exists\bsig\in\spanvr \basis $ such that $\bsig> \bm 0$
\end{enumerate}
}
\end{mylemma}
\begin{IEEEproof}
1) means that $\basis\subset[\aset \cap \spanvr \basis]$. Then
$\dimr[\aset \cap \spanvr \basis]\geq\dimr\basis=\nbasis$. Noting that
$\dimr[\aset \cap \spanvr \basis]\leq \nbasis$ for any $\basis$ shows
that $\dimr[\aset \cap \spanvr \basis]=\nbasis$. On the other hand, if
2) holds, we can assume without any loss of generality that
$\basis=\{\bsig_0,\ldots,\bsig_{\nbasis-1}\}$ where
$\bsig_0=\bsig>0$. If $\basis'=\{\bsig_0,
\bsig_1+\alpha\bsig_0,\ldots,\bsig_{\nbasis-1}+\alpha\bsig_0\}$, then
$\spanvr\basis = \spanvr\basis'$ for any $\alpha$. Choose
$\alpha=-\min_\basisind \lambda_\text{min}(\bsig_\basisind)/
\lambda_\text{min}(\bsig_0)$, with $ \lambda_\text{min}$ representing
the minimum eigenvalue. Then $\basis'$ satisfies 1), which concludes
the proof.
\end{IEEEproof}
}

\change{ Since at least one of the above sufficient conditions will be
  satisfied in all cases considered in this paper,
  \thref{prop:psdm} establishes that positive semidefiniteness plays
  no role in the compression of Toeplitz, circulant or banded
  subspaces. Hence, no compression improvements are possible in
  those~cases.  }



Clearly, a matrix $\bbphi$ may define a covariance sampler for
certain sets $\basis$ but not for others. If a matrix $\bbphi$ is a
covariance sampler for any choice of  $\basis$, we call it
\emph{universal}:
\begin{mydefinition} 
\emph{A sampling matrix $\bbphi\in \cfield^{\clen\times \ulen}$ defines a
  {universal covariance sampler} if it is an $\basis$-covariance sampler for any
  linearly independent set~$\basis$ of $\ulen \times \ulen$ HT matrices. }
\end{mydefinition}

Knowing $\basis$ is always beneficial since $\bbphi$ may be tailored
to obtain optimal compression ratios and estimation
performance. Universal samplers are motivated by those cases where
$\basis$, or even $\nbasis$, is unknown at the moment of designing the
compression matrix.  Note that other notions of universal samplers
have been introduced in different
contexts~\cite{feng1996blind,mishali2009reconstruction,baraniuk2008simpleproof,candes2008introduction,yen2013subnyquist}.

\subsection{Interpretation}
\label{sec:tf:int}

Due to the definition of domain and codomain in
\eqref{eq:phidef},  $\phi$ clearly represents a surjective map. Therefore,
the notion of invertibility actually means that $\phi$ must be
injective, that is, for any set of real coefficients $\alpha_\basisind$ and
$\beta_\basisind$,
\begin{align}
  \phi\left( \sum_\basisind \alpha_\basisind \bsig_\basisind\right) =   \phi\left( \sum_\basisind \beta_\basisind \bsig_\basisind\right)~~
  \Rightarrow~~
\alpha_\basisind = \beta_\basisind ~\forall \basisind.
\end{align}
This condition is, in turn, equivalent to 
\begin{align}
\label{eq:libbsig}
 \sum_\basisind \alpha_\basisind \bbsig_\basisind =    \sum_\basisind \beta_\basisind \bbsig_\basisind
  \Rightarrow
\alpha_\basisind = \beta_\basisind ~\forall \basisind,
\end{align}
which means that $\cbasis$ must be linearly independent. Thus,
determining whether a given matrix $\bbphi$ defines an
$\basis$-covariance sampler amounts to checking whether
$\cbasis=\phi(\basis)$ is linearly independent or not.  Alternatively,
\eqref{eq:libbsig} states that no two different linear combinations of
the matrices in $\cbasis$ can result in the same $\bbsig$, which means
that covariance samplers can also be defined as those samplers
preserving the identifiability of the coefficients $\alpha_\basisind$.

To the best of our knowledge, \thref{def:covs} is the first attempt to
formalize the design of samplers for CCS problems using abstract
criteria not depending on specific algorithms.  In the sequel, several
results will be established to determine whether a matrix 
defines a covariance sampler or, in some cases, even a universal
covariance sampler.

\subsection{Notable Covariance Subspaces}
\label{sec:rcs}

The results about covariance samplers  derived in this paper
will be particularized in Sec.~\ref{sec:nucs} for the most common
co\-va\-riance subspaces, which are defined next:

\subsubsection{Toeplitz Subspace}

A matrix is Toeplitz if it is constant along its
diagonals\cite{gray}. The set of all $\ulen\times \ulen$ HT matrices,
represented as $\sspace$, is a subspace of $\cfield^{\ulen\times \ulen}$
over the real scalar field,\footnote{The reason is that any linear
  combination with real coefficients of HT matrices is also HT. This
  statement is false for complex coefficients.} and it is the largest
subspace considered in this paper. The \emph{standard basis} of
$\sspace$ is defined as the set
\begin{align}
\label{eq:stdb}
\basisst=\{\bm I_\ulen\}\cup\{ \tbasismatr_1 ,\cdots, \tbasismatr_{\ulen-1}\}\cup
 \{\tbasismati_1,\cdots, \tbasismati_{\ulen-1}\},
\end{align}
where $\tbasismatr_\uind$ denotes the HT matrix with all zeros except for the entries
on the diagonals $+\uind$ and $-\uind$, which have ones, and $\tbasismati_\uind$
represents the HT matrix with all zeros except for the entries on the
diagonal $+\uind$, which have the imaginary unit $\jmath$, and those on
the diagonal $-\uind$, which have $-\jmath$.  Formally, 
\begin{align}
\tbasismatr_\uind &= \sidsulen^\uind + (\sidsulen^\uind)^T~~~~\uind\geq 1\\
\tbasismati_\uind &= \jmath\sidsulen^\uind -\jmath (\sidsulen^\uind)^T~~~~\uind\geq 1,
\end{align}
where $\sidsulen$ is the first linear shift of $\bm I_{\ulen}$ to the
right, i.e., the matrix whose element $(m,n)$ is one if $n-m = 1$ and
zero otherwise. The basis $\basisst$ shows that
$\dimr\sspace=2\ulen-1$.  Another important basis for this subspace is
the Fourier basis:
\begin{align}
\label{eq:bffourier}
\basis_F=\{\bsig_0,\cdots,\bsig_{2\ulen-2}\},~~(\bsig_\basisind)_{m,n}=
\frac{\displaystyle e^{j\frac{2\pi}{2\ulen-1}(m-n)\basisind}}{2\ulen-1}.
\end{align}

\subsubsection{Circulant Subspace} 
A circulant matrix is a matrix whose $n$-th row equals the $n$-th
circular rotation of the zeroth row\footnote{Recall the conventions
  introduced in Sec.~\ref{sec:not}.}  to the right~\cite{gray}. In
other words, the element $(m,n)$ equals the element $(m',n')$ if
$(m-n)_\ulen = (m'-n')_\ulen$.  In our case, the matrices in the
circulant subspace must be HT and circulant simultaneously. A possible
basis for $\ulen$ odd is
\begin{align}
\label{eq:bsetscdef}
\basissc=\{\bm I_\ulen\} \cup \{ \bm C_1,\cdots,\bm C_{\frac{\ulen-1}{2}}\}\cup
 \{\tbm
 C_1,\cdots,\tbm C_{\frac{\ulen-1}{2}}\},
\end{align}
where 
\begin{align}
\bm C_\ulenind &= \tbasismatr_\ulenind + \tbasismatr_{\ulen-\ulenind},~~~\ulenind=1,\ldots,\lfloor{{(\ulen-1)}/{2}}\rfloor\\
\tbm C_\ulenind &= \tbasismati_\ulenind - \tbasismati_{\ulen-\ulenind},~~~\ulenind=1,\ldots,\lfloor{{(\ulen-1)}/{2}}\rfloor,
\end{align}
and
\begin{align*}
\basissc=\{\bm I_\ulen\} \cup \{ \bm C_1,\cdots,\bm C_{\frac{\ulen}{2}-1}\}\cup
 \{\tbm
 C_1,\cdots,\tbm C_{\frac{\ulen}{2}-1}\} \cup \{ \tbasismatr_{\frac{\ulen}{2}}\}
\end{align*}
for $\ulen$ even.  Clearly, the dimension of this
 subspace equals $\ulen$.

\subsubsection{$\sndiag$-banded Subspace}
A $\sndiag$-\emph{banded matrix} is a
matrix where all the elements above the diagonal $+\sndiag$ and below
the diagonal $-\sndiag$ (these diagonals noninclusive) are zero. A
possible basis for this subspace is given by
\begin{align}
\label{eq:bsetsbdef}
\basissb=\{\bm I_\ulen\} \cup \{ \tbasismatr_1,\cdots,\tbasismatr_{\sndiag}\}\cup
 \{\tbasismati_1,\cdots,\tbasismati_{\sndiag}\},
\end{align}
which is a subset of $\basisst$. The dimension is therefore
$2\sndiag+1$.

\subsection{ The Role of Periodicity }
\label{sec:trp}

The fact that many sampling schemes operate repeatedly on a
block-by-block basis leads to the concept of periodicity (see
Sec.~\ref{sec:apps}). Note, however, that subsequent stages may
process multiple blocks jointly. Assume that $\bx$ is partitioned into
$\nblocks$ blocks of $N = \ulen/\nblocks$ samples
as\footnote{For simplicity, we assume that $\ulen$ is an integer
  multiple of $\nblocks$.} $\bx =
[\bx\bindex{0}^T,\cdots,\bx\bindex{\nblocks-1}^T]^T$, with
$\bx\bindex{\blockind}\in \cfield^{N}~\forall \blockind$ and that
sampling a block with $N$ elements results in another block with $M$
elements:
\begin{align}
\label{eq:blocksamp}
\by\bindex{\blockind}= \bphi \bx\bindex{\blockind},~~~~\blockind=0,1,\ldots,\nblocks-1,
\end{align}
where $\by\bindex{\blockind} \in \cfield^M$ and
$\bphi\in\cfield^{M\times N}$.  The use of the term \emph{periodicity}
owes to the fact that the matrix $\bphi$ does not depend on
$\blockind$.  By making $\by =
[\by\bindex{0}^T,\cdots,\by\bindex{\nblocks-1}^T]^T$ and
\begin{align}
\label{eq:ilphi}
\bbphi=\bm I_\nblocks\otimes\bphi,
\end{align}
expression \eqref{eq:blocksamp} results in \eqref{eq:samp}. From
\eqref{eq:ilphi}, it also follows that the matrices in $\cbasis$ are
block Toeplitz with $M\times M $ blocks.  

Since $\clen=M\nblocks$, the compression ratio in the periodic setting
takes the form
\begin{align}
\rho = \frac{\ulen}{\clen} = \frac{N}{M}.
\end{align}

Further conventions are useful when dealing with sparse sampling, in
which case, as seen in Sec.~\ref{sec:cs}, $\bbphi$ equals a submatrix
of $\bm I_\ulen$ up to row permutations. For concreteness, assume that
the rows of $\bbphi$ are ordered as they are in $\bm I_\ulen$.  If
$\spat=\{\uind_0,\cdots,\uind_{\clen-1}\}$ denotes the set containing
the indices of the non-null columns of $\bbphi$, the entries of $\by =
\bbphi \bx$ are given by
$y\vindex{\cind}=x\vindex{\uind_\cind},~\uind\vindex{\cind}\in \spat$,
where $\bx = [x\vindex{0},\cdots, x\vindex{\ulen-1}]^T$ and $\by =
[y\vindex{0},\cdots, y\vindex{\clen-1}]^T$.  The set $\bspat$, which
contains the indices of the non-null columns $\bphi$, is related to
$\spat$ by
\begin{align}
  \spat = \{ \bspatel+\blockind N,~\bspatel\in\bspat, \blockind=0,1,\ldots,\nblocks-1\}.
\end{align}
Loosely speaking, we say that $\spat$ is periodic with period
$\bspat$. These sets have $|\spat|=\clen=M\nblocks$ and $|\bspat|=M$
elements.

Note that periodic sampling indeed ge\-ne\-ra\-li\-zes non-periodic
sampling, since the latter can be retrieved just by making $\nblocks =
1$. For this reason, most results will be presented for periodic
samplers, with occasional comments on the non-periodic setting if
needed.

\section{Design of Covariance Samplers}
\label{sec:dcs}
The results in this section allow to determine
whether a matrix $\bbphi$ defines a covariance sampler or not,
and provide useful means to design these matrices for a given
  $\basis$. They are based on the following basic result from linear
algebra:
\begin{mylemma}
\thlabel{prop:lind} \emph{ Let $\basis=\{\bsig_0,\vecdots,\bsig_{\nbasis-1}\}$
  be a set of Hermitian matrices. If $\basis$ is
  linearly independent when considering real coefficients, that is,
\begin{align}
\label{eq:lind}
 \sum_{\basisind=0}^{\nbasis-1} \alpha_\basisind \bsig_\basisind =    \bm 0,~~~\alpha_\basisind \in \rfield~~
  \Rightarrow
\alpha_\basisind = 0 ~\forall \basisind,
\end{align}
then it is also independent when
considering coefficients in $\cfield$, i.e., \eqref{eq:lind} also applies
when $\alpha_\basisind \in \cfield$.
}
\end{mylemma}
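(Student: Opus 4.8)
The plan is to prove the contrapositive-free statement directly: real linear independence of a set of Hermitian matrices implies complex linear independence. The key observation I would exploit is that Hermitian matrices form a vector space over $\rfield$ but \emph{not} over $\cfield$ (multiplying a nonzero Hermitian matrix by $\jmath$ destroys the Hermitian property unless the matrix is zero). So the trick is to take a vanishing complex combination and split it into real and imaginary parts, each of which must vanish separately because the parts live in orthogonal pieces of the space.

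Concretely, suppose $\sum_{\basisind=0}^{\nbasis-1}\alpha_\basisind\bsig_\basisind=\bm 0$ with $\alpha_\basisind\in\cfield$. Write $\alpha_\basisind = a_\basisind + \jmath b_\basisind$ with $a_\basisind,b_\basisind\in\rfield$, so that
\begin{align}
\label{eq:cplxvanish}
\sum_\basisind a_\basisind\bsig_\basisind + \jmath\sum_\basisind b_\basisind\bsig_\basisind = \bm 0.
\end{align}
Now I would take the conjugate transpose of the whole equation. Since each $\bsig_\basisind$ is Hermitian, $\bsig_\basisind^H=\bsig_\basisind$, and the real coefficients are unaffected while the $\jmath$ picks up a sign. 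This yields
\begin{align}
\label{eq:cplxvanishH}
\sum_\basisind a_\basisind\bsig_\basisind - \jmath\sum_\basisind b_\basisind\bsig_\basisind = \bm 0.
\end{align}
Adding \eqref{eq:cplxvanish} and \eqref{eq:cplxvanishH} gives $2\sum_\basisind a_\basisind\bsig_\basisind=\bm 0$, and subtracting them gives $2\jmath\sum_\basisind b_\basisind\bsig_\basisind=\bm 0$. Hence both $\sum_\basisind a_\basisind\bsig_\basisind=\bm 0$ and $\sum_\basisind b_\basisind\bsig_\basisind=\bm 0$ are vanishing combinations with \emph{real} coefficients.

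At this point the hypothesis does the rest of the work: by the assumed real linear independence \eqref{eq:lind}, the first relation forces $a_\basisind=0$ for all $\basisind$, and the second forces $b_\basisind=0$ for all $\basisind$. Therefore $\alpha_\basisind=a_\basisind+\jmath b_\basisind=0$ for every $\basisind$, which is exactly complex linear independence. The converse direction (complex independence implies real independence) is immediate and not even needed here, since real combinations are a special case of complex ones.

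I do not anticipate a genuine obstacle — this is a short structural argument. The only point requiring a little care is the very first step of justifying that conjugate-transposing the sum is legitimate and that it acts as claimed on each term; this rests entirely on the Hermitian hypothesis $\bsig_\basisind^H=\bsig_\basisind$, which is exactly why the statement is true for Hermitian matrices but would fail for arbitrary complex matrices. I would make sure to state explicitly that the decomposition of a complex scalar into real and imaginary parts, combined with Hermiticity, is what separates the single complex relation into two independent real relations.
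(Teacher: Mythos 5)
Your argument is correct and is precisely the argument the paper intends: its one-line proof ("combine \eqref{eq:lind} with $\bsig_\basisind=\bsig_\basisind^H$") is exactly your step of conjugate-transposing the vanishing complex combination to separate it into two vanishing real combinations, to which the hypothesis applies. No gaps; you have simply written out the details the paper leaves implicit.
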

\begin{IEEEproof}
It easily follows by combining expression \eqref{eq:lind} with the
fact that $\bsig_\basisind = \bsig_\basisind^H,~\forall \basisind$.
\end{IEEEproof}

The importance of this basic fact
is that it allows us to focus on the complex
extension of $\phi$, defined as
   \begin{align}
\label{eq:phic}
     \begin{array}{ccccc}
\displaystyle   \spanvc \basis & \xrightarrow{~~\phic~~}
&\displaystyle  \spanvc \cbasis \\
  \bsig &\xrightarrow{~~~~~~} &\phic( \bsig) = \bbphi \bsig \bbphi^H.
     \end{array}
   \end{align}
In other words,  $\bbphi$ defines a covariance sampler
iff $\phic$ is an invertible function. An equivalent statement is
provided by the following lemma, which is the basic tool to be used in
the design of  covariance samplers.

\begin{mylemma}
\thlabel{prop:ker}
\emph{
Let $\ker\phic$ denote the set of matrices $\bsig\in \spanvc \basis$
satisfying  $\phic(\bsig)=\bm 0$. Then, a matrix $\bbphi$ defines a
covariance sampler if and only if  $\ker \phic=\{\bm 0\}$.
}
\end{mylemma}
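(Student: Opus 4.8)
The plan is to reduce the statement to an elementary fact about $\cfield$-linear maps, leaning on the groundwork laid just before the lemma. Recall that it has already been argued (via \thref{prop:lind}) that $\bbphi$ defines a covariance sampler if and only if the complex extension $\phic$ in \eqref{eq:phic} is invertible. It therefore suffices to prove that $\phic$ is invertible if and only if $\ker\phic=\{\bm 0\}$.

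First I would note that $\phic$ is $\cfield$-linear, which is immediate from $\phic(\bsig)=\bbphi\bsig\bbphi^H$ since $\bbphi(\alpha\bsig+\beta\bsig')\bbphi^H=\alpha\,\bbphi\bsig\bbphi^H+\beta\,\bbphi\bsig'\bbphi^H$ for all $\alpha,\beta\in\cfield$. Next I would check that $\phic$ is surjective onto its declared codomain $\spanvc\cbasis$: because $\cbasis=\{\phic(\bsig_0),\ldots,\phic(\bsig_{\nbasis-1})\}$ and $\{\bsig_0,\ldots,\bsig_{\nbasis-1}\}$ spans $\spanvc\basis$, linearity gives $\phic(\spanvc\basis)=\spanvc\{\phic(\bsig_0),\ldots,\phic(\bsig_{\nbasis-1})\}=\spanvc\cbasis$, so the image is the full codomain.

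Finally, for such a surjective linear map, invertibility is equivalent to injectivity, and injectivity is in turn equivalent to triviality of the kernel. Indeed, if $\ker\phic=\{\bm 0\}$ then $\phic(\bsig)=\phic(\bsig')$ forces $\phic(\bsig-\bsig')=\bm 0$ and hence $\bsig=\bsig'$, so $\phic$ is injective and, being surjective, bijective; conversely, an invertible map is injective, whence its kernel is $\{\bm 0\}$. Combining this with the reduction above yields the claim. I anticipate no real obstacle, as this is a direct application of standard linear algebra; the only point that deserves care is the surjectivity of $\phic$ onto $\spanvc\cbasis$, which is precisely what allows one to pass from \emph{injective} to \emph{invertible}.
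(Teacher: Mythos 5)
Your proposal is correct and follows essentially the same route as the paper, which simply declares the lemma an immediate consequence of Definition 1 and Lemma 2 (the paper's remark after \eqref{eq:phic} already records that $\bbphi$ is a covariance sampler iff $\phic$ is invertible). You have merely filled in the standard linear-algebra details — linearity, surjectivity onto $\spanvc\cbasis$, and the equivalence of injectivity with a trivial kernel — which the authors leave implicit.
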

\begin{proof}
It is an immediate consequence of  \thref{def:covs} and 
\thref{prop:lind}. 
\end{proof}



\subsection{Design of Sparse Samplers}

Designing {sparse} samplers involves manipulating \emph{difference
  sets}, which contain all possible distances between elements of
another set:
\begin{mydefinition}
\emph{ The difference set of $\aset\subset \zset$, denoted as
  $\Delta(\aset)$, is defined as:}
\begin{align}
\Delta(\aset) = \{ \dif \geq 0: \exists a_1,a_2\in \aset~ \st ~\dif =a_2-a_1\}.
\end{align}
\end{mydefinition}
Note that the difference set considers no
repetition of elements, i.e., every distance shows up at most
once. The cardinality of $\Delta(\aset)$ is
upper bounded by one plus the number of unordered subsets of $\aset$
with two elements:
\begin{align}
\label{eq:cdaub}
|\Delta(\aset)|  \leq \frac{|\aset|\cdot (|\aset|-1)}{2} +1,
\end{align}
where the $+1$ term accounts for the fact that $0\in\Delta(\aset)$ for
any non-empty $\aset$. 


The correlation vector  $\acv_\basisind$ associated with the HT matrix $\bsig_\basisind$ is
defined as the  the first column of
$\bsig_\basisind$. The following theorem is a quick method to verify whether a
{sparse} sampler defined by a set $\spat$ is a covariance sampler.

\begin{mytheorem}
\thlabel{prop:rmat} \emph{ Let
  $\basis=\{\bsig_\basisind\}_{\basisind=0}^{\nbasis-1}$ be a linearly
  independent~set of HT matrices, let
  $\{\acv_\basisind\}_{\basisind=0}^{\nbasis-1}$ be the associated set
  of correlation vectors, and let $\rdel_\basisind$ be the vector
  whose entries are the elements of $\acv_\basisind$ indexed by
  $\Delta(\spat)$. Then, $\spat $ defines an $\basis$-covariance
  sampler if and only if $\rank \bm R = \nbasis$, where }
\begin{align}
\label{eq:rmat}
\bm R = 
\left[
\begin{array}{cccc}
\rdel_0 & \rdel_1 & \ldots &\rdel_{\nbasis-1}\\
\rdel_0^\ast & \rdel_1^\ast & \ldots &\rdel_{\nbasis-1}^\ast\\
\end{array}
\right].
\end{align}
\end{mytheorem}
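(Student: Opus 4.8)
The plan is to reduce the statement to a linear-algebraic rank condition via \thref{prop:ker}, which guarantees that $\spat$ defines an $\basis$-covariance sampler if and only if $\ker \phic = \{\bm 0\}$. Accordingly, I would take an arbitrary element $\bsig = \sum_{\basisind} \alpha_\basisind \bsig_\basisind \in \spanvc \basis$ with $\alpha_\basisind \in \cfield$ and characterize precisely when $\phic(\bsig) = \bbphi \bsig \bbphi^H = \bm 0$. Since $\bbphi$ merely selects the rows of $\bm I_\ulen$ indexed by $\spat = \{\uind_0,\ldots,\uind_{\clen-1}\}$, the product $\bbphi \bsig \bbphi^H$ is the principal submatrix of $\bsig$ formed by the rows and columns in $\spat$, whose $(a,b)$ entry equals $\bsig_{\uind_a,\uind_b}$.

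The key observation is that every $\bsig \in \spanvc \basis$ is Toeplitz, since a complex linear combination of Toeplitz matrices remains Toeplitz even though it need not be Hermitian; hence $\bsig_{\uind_a,\uind_b}$ depends only on the difference $\uind_b-\uind_a$. Consequently $\phic(\bsig)=\bm 0$ holds if and only if the value of $\bsig$ on every diagonal indexed by some $\uind_b-\uind_a$ vanishes, i.e.\ on every diagonal in $\Delta(\spat)\cup(-\Delta(\spat))$. I would then translate these diagonal values into the correlation vectors: writing $[\acv_\basisind]_\dif$ for the entry of $\acv_\basisind$ on the non-negative diagonal $\dif$, the value of $\bsig$ on diagonal $-\dif$ is $\sum_\basisind \alpha_\basisind [\acv_\basisind]_\dif$, while the value on the opposite diagonal $+\dif$ is the conjugate-weighted counterpart $\sum_\basisind \alpha_\basisind \overline{[\acv_\basisind]_\dif}$; here I would invoke the Hermiticity of each $\bsig_\basisind$, which makes its $+\dif$ and $-\dif$ diagonal entries complex conjugates.

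Collecting the vanishing conditions over all $\dif \in \Delta(\spat)$, and recalling that $\rdel_\basisind$ is precisely $\acv_\basisind$ restricted to the entries indexed by $\Delta(\spat)$, the two families of constraints become $\sum_\basisind \alpha_\basisind \rdel_\basisind = \bm 0$ and $\sum_\basisind \alpha_\basisind \rdel_\basisind^\ast = \bm 0$, that is, $\bm R \bm\alpha = \bm 0$ with $\bm\alpha = [\alpha_0,\ldots,\alpha_{\nbasis-1}]^T$ and $\bm R$ as in \eqref{eq:rmat}. Therefore $\ker \phic = \{\bm 0\}$ if and only if $\bm R \bm\alpha = \bm 0$ forces $\bm\alpha = \bm 0$, which—since $\bm R$ has $\nbasis$ columns—is equivalent to $\rank \bm R = \nbasis$; combined with \thref{prop:ker} this proves the claim.

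I would expect the main subtlety to be the careful bookkeeping of conjugation. Because the coefficients $\alpha_\basisind$ are complex, $\bsig$ is in general not Hermitian, so the values of $\bsig$ on diagonals $+\dif$ and $-\dif$ are \emph{not} conjugates of one another and yield two genuinely independent linear constraints. This is exactly what forces both the block $[\rdel_0,\ldots,\rdel_{\nbasis-1}]$ and its conjugate block to appear in $\bm R$, and it is the reason the argument must be run on the complex extension $\phic$ (legitimized by \thref{prop:lind}) rather than on $\phi$ over $\rfield$. A minor point to verify along the way is the diagonal $\dif = 0$, where the two constraints coincide because the main-diagonal entries of the Hermitian $\bsig_\basisind$ are real; this merely duplicates a row of $\bm R$ and leaves the rank condition unaffected.
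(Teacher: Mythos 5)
Your proposal is correct and follows essentially the same route as the paper's proof: both reduce the claim to \thref{prop:ker}, observe that the compressed matrix retains exactly the diagonals of $\bsig$ indexed by $\Delta(\spat)$ and its negation, use Hermiticity of the $\bsig_\basisind$ to express the $+\dif$ diagonals as conjugates of the $-\dif$ ones (hence the conjugate block and the duplicated zero-lag row in $\bm R$), and conclude that $\ker\phic=\{\bm 0\}$ iff $\bm R$ has full column rank. Your write-up is simply a more explicit, kernel-side rendering of the paper's vectorize-and-deduplicate argument, with the conjugation bookkeeping (correctly) spelled out.
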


\begin{proof}
Observe that 
 $\bbsig_\basisind$ contains an element from the $\dif$-th diagonal of
$\bsig_\basisind$ iff $|\dif |\in \Delta(\spat)$. Now vectorize the
matrices in $\cbasis$ and arrange these vectors as columns of a
matrix. By removing repeated rows and duplicating the row
corresponding to the main diagonal we obtain $\bm R$. Therefore, the
number of linearly independent columns in $\bm R$ equals the number of
linearly independent matrices in $\cbasis$.  The result follows from
\thref{prop:ker} by noting that $\ker \phic=\{\bm 0\}$ iff $\rank \bm
R = \nbasis$.
\end{proof}

From \eqref{eq:rmat}, it is easy to conclude\footnote{Note the
  existence of a duplicate row in $\bm R$.} that
$2|\Delta(\spat)|-1\geq \nbasis$ in order for $\bm R$ to be full
column rank. Combining this expression with~\eqref{eq:cdaub} results
in the following necessary condition for $\spat$ to define a
covariance sampler:
\begin{align}
  {\clen\cdot (\clen-1)} +1 \geq \nbasis.
\end{align}
\subsection{Design of {Dense} Samplers}
\label{sec:csrs}
Designing sampling matrices is oftentimes involved due to the nature
of the design criteria.  In many cases, it is convenient to draw
$\bbphi$ at random using a distribution that provides an admissible
sampler with a certain
probability~\cite{candes2008introduction,baraniuk2008simpleproof}. Following
this idea, this paper employs probabilistic techniques to obtain
optimal designs for dense samplers.

These techniques provide sampling matrices with an acceptable behavior
without considering any structure of the covariance subspace other
than its dimension.  The next result establishes the minimum size of a
random matrix $\bphi$ to define a covariance sampler.  The only
requirement is that this matrix be drawn from a continuous probability
distribution.
\change{
\begin{mytheorem}
\thlabel{prop:2} \emph{ 
Let $\bphi\in \cfield^{M\times N}$, with $M\leq N$, be a random matrix
with a continuous probability distribution.\footnote{Formally, we say
  that a distribution $\mu$ is continuous if it is \emph{absolutely
    continuous with respect to Lebesgue measure}, that is, $\mu(B)=0$
  for all Borel sets of $ \cfield^{M\times N}$ with zero Lebesgue
  measure~\cite{billingsley}. Intuitively, this means that there are
  no probability \emph{masses}. } Then, with probability one, the
matrix $\bbphi=\bm I_{\nblocks}\otimes \bphi$ defines an
$\basis$-covariance sampler if and only if $\nbasis\leq
M^2(2\nblocks-1)$, where $\nbasis$ is the cardinality of the HT basis
set $\basis$.  }
\end{mytheorem}
}
\begin{proof}
See Appendix~\ref{sec:pp2}.
\end{proof}

\removenow{Note that the requirement imposed by \thref{prop:2} on the probability
distribution is weaker than those in CS for continuous
distributions, where reconstruction results exist just for
distributions satisfying certain measure concentration
inequalities~\cite{baraniuk2008simpleproof}. Moreover, \thref{prop:2}
does not even require the elements of $\bphi$ to be i.i.d., which is a
standard requirement in CS. On the other hand, \thref{prop:2} does not
apply to the case of discrete distributions such as the Bernoulli
distribution, which is widely used in CS.}

Note that the matrices in $\spanvr\cbasis$ are Hermitian and block
Toeplitz with $M\times M$ blocks. It can be seen that the dimension of
such a subspace is at most $M^2(2\nblocks-1)$, which is exactly the
one achieved by the random design from \thref{prop:2} when $\nbasis=
M^2(2\nblocks-1)$ (see Sec.~\ref{sec:tf:int}). Therefore, no other
design can achieve a higher compression ratio.


\section{Universal Covariance Samplers}
\label{sec:ucs}

After having laid the mathematical framework, we are ready to provide
designs that result in covariance samplers independently of which
basis of HT matrices is considered.
The first result of this section reduces the task of checking whether
a given matrix defines a covariance sampler for all possible bases to
that of checking just for one. 

\begin{mylemma}
\thlabel{prop:bbasis}
\emph{
Let $\basis$ be a basis for $\sspace$. Then, a sampler $\bbphi$ is
universal if and only if it is an $\basis$-covariance sampler.
}
\end{mylemma}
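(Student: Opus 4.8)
The plan is to establish the two implications separately, noting that all of the content lies in the converse direction. The forward implication is immediate: a basis $\basis$ for $\sspace$ is, in particular, a linearly independent set of $\ulen\times\ulen$ HT matrices, so by the very definition of a universal sampler, if $\bbphi$ is universal then it is automatically an $\basis$-covariance sampler. I would dispose of this direction in a single sentence.

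For the converse, I would first reformulate the hypothesis using the observation made after \thref{def:covs}: since $\phi$ is surjective onto its codomain by construction, being an $\basis$-covariance sampler is equivalent to $\phi$ being \emph{injective}. Because $\basis$ spans all of $\sspace$, we have $\spanvr\basis=\sspace$, so the hypothesis says precisely that the $\rfield$-linear map $\phi(\bsig)=\bbphi\bsig\bbphi^H$ is injective on the entire space $\sspace$; equivalently, by \thref{prop:ker}, that $\bbphi\bsig\bbphi^H=\bm 0$ forces $\bsig=\bm 0$ for every HT matrix $\bsig$.

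The key structural step is then to observe that every HT matrix belongs to $\sspace$, so for any linearly independent set $\basis'$ of HT matrices the subspace $\spanvr\basis'$ is contained in $\sspace$. Consequently the map $\phi'$ associated with $\basis'$ is nothing but the restriction of $\phi$ to this subspace, since both are given by the same formula $\bsig\mapsto\bbphi\bsig\bbphi^H$. Because the restriction of an injective linear map to a subspace is still injective, $\phi'$ is injective, hence invertible, and therefore $\bbphi$ is an $\basis'$-covariance sampler. As $\basis'$ is arbitrary, $\bbphi$ is universal.

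I do not anticipate a genuine obstacle here; the engine of the proof is simply that injectivity on a space descends to every subspace. The only point that warrants care is the bookkeeping of domains---verifying that $\spanvr\basis'\subseteq\sspace$ and that $\phi'$ is truly the restriction of $\phi$---which rests on the fact, used implicitly throughout the paper, that real linear combinations of HT matrices remain HT. I would invoke \thref{prop:lind} only to reassure the reader that restricting the scalar field to $\rfield$ entails no loss, but the cleanest version of the argument is entirely real and requires no complexification.
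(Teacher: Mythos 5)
Your proof is correct and follows essentially the same route as the paper's: both directions are handled by observing that $\spanvr\basis'\subseteq\sspace=\spanvr\basis$ for any independent set $\basis'$ of HT matrices, so the map $\phi'$ is a restriction of the injective map $\phi$ and hence injective. The extra remarks about \thref{prop:ker} and \thref{prop:lind} are harmless but not needed; the core argument matches the paper's.
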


\begin{proof}
Clearly, if $\bbphi$ is universal, it is also an $\basis$-covariance
sampler. Conversely, if $\bbphi$ is an $\basis$-covariance sampler, it
is also an $\basis'$-covariance sampler for any basis $\basis'$ of HT
matrices since the restriction of an injective map is always
injective.
\end{proof}

The rest of this section applies this result to obtain sparse and
dense universal covariance samplers. \removenow{The main result in the
  former case is that the period $\bspat$ of $\spat$ must be a
  \emph{sparse ruler}, which is a well-known mathematical object
  reviewed below. In the latter case, the conclusions are similar to
  those from Sec.~\ref{sec:dcs} in the sense that the universality of
  a sampler is guaranteed with probability one if the dimensions of
  $\bbphi$ are properly set.}

\subsection{Sparse Samplers}
\label{sec:ucs:nus}

The next necessary and sufficient condition for a {sparse} sampler to
be universal basically states that all autocorrelation lags must be
identifiable from the compressed observations.
\begin{mytheorem}
\thlabel{prop:1}
\emph{
The set $\spat\subset\intset{0}{\ulen-1}$ defines a universal covariance sampler if and only if
$\Delta(\spat) = \{0,\ldots,\ulen-1\}$.
}
\end{mytheorem}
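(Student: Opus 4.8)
The plan is to reduce the universality question to a concrete rank computation and then exploit the explicit form of the standard basis of $\sspace$. First I would invoke \thref{prop:bbasis} to replace the condition ``universal covariance sampler'' by the equivalent condition ``$\basisst$-covariance sampler'', where $\basisst$ is the standard basis of the Toeplitz subspace in \eqref{eq:stdb}, of cardinality $\nbasis=2\ulen-1$. This lets me work with a single, fully explicit basis. Applying \thref{prop:rmat} to $\basisst$, the task then becomes showing that $\rank\bm R=2\ulen-1$ if and only if $\Delta(\spat)=\intset{0}{\ulen-1}$, where $\bm R$ is assembled from the correlation vectors of $\basisst$ restricted to the index set $\Delta(\spat)$.

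The next step is to read off those correlation vectors from the first columns of the basis matrices: a direct inspection gives $\acv=\bm e_0$ for $\bm I_\ulen$, $\acv=\bm e_\uind$ for $\tbasismatr_\uind$, and $\acv=-\jmath\,\bm e_\uind$ for $\tbasismati_\uind$. Thus each basis matrix contributes a correlation vector supported on a single lag, and all lags $0,\ldots,\ulen-1$ occur exactly once. Since $\rdel_\basisind$ retains only the entries indexed by $\Delta(\spat)$, the support pattern of $\bm R$ is governed entirely by which lags survive in $\Delta(\spat)$.

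For necessity I would argue by counting. Because the main-diagonal entries of Hermitian matrices are real, the row of $\bm R$ associated with lag $0$ is duplicated between the top and bottom blocks, so $\rank\bm R\le 2|\Delta(\spat)|-1$. Full rank $2\ulen-1$ therefore forces $|\Delta(\spat)|\ge\ulen$; combined with the trivial inclusion $\Delta(\spat)\subseteq\intset{0}{\ulen-1}$ this yields $\Delta(\spat)=\intset{0}{\ulen-1}$. For sufficiency I would run the converse: when $\Delta(\spat)$ is the full lag set, $\rdel_\basisind=\acv_\basisind$, and a lag-by-lag examination of a vanishing complex combination of the columns of $\bm R$ shows that the coefficient of $\bm I_\ulen$ dies at lag $0$, while at each lag $\uind\ge 1$ the top-block equation $b_\uind-\jmath c_\uind=0$ and the bottom-block equation $b_\uind+\jmath c_\uind=0$ force $b_\uind=c_\uind=0$. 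Hence the $2\ulen-1$ columns are independent, $\rank\bm R=2\ulen-1$, and $\spat$ is universal by \thref{prop:rmat} together with \thref{prop:bbasis}.

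The only delicate point is the bookkeeping in the necessity direction: one must correctly account for the single duplicated lag-$0$ row of $\bm R$ so that the bound $\rank\bm R\le 2|\Delta(\spat)|-1$ is tight, since an off-by-one here would break the forcing of $|\Delta(\spat)|=\ulen$. Everything else is the routine linear algebra of the paired $\pm\jmath$ structure of the standard basis.
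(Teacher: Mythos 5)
Your proposal is correct and follows essentially the same route as the paper: reduce universality to the $\basisst$-covariance-sampler condition via \thref{prop:bbasis}, apply \thref{prop:rmat}, and observe that $\bm R$ takes the block form $\bigl[\begin{smallmatrix}\bm I_\ulen & -\jmath\tbm I_\ulen\\ \bm I_\ulen & \jmath\tbm I_\ulen\end{smallmatrix}\bigr]$ precisely when $\Delta(\spat)=\intset{0}{\ulen-1}$, with missing lags destroying full column rank. Your explicit column-independence check and the duplicated lag-$0$ row count are just slightly more detailed renderings of the rank observations the paper makes directly.
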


\begin{proof}
\change{
Consider the basis $\basisst$ from \eqref{eq:stdb}. If
$\Delta(\spat)=\{0,\ldots,\ulen-1\}$, the matrix $\bm R$ from
\thref{prop:rmat} becomes
\begin{align}
\bm R = \left[
\begin{array}{c c}
  \bm I_\ulen &   -\jmath\tbm I_\ulen \\
  \bm I_\ulen &   \jmath\tbm I_\ulen \\
\end{array}
\right],
\end{align}
where $\tbm I_\ulen$ is the submatrix of $\bm I_\ulen$ that results
from removing the first column. Since $\bm R$ has rank $2\ulen-1$,
$\spat$ defines an $\basisst$-covariance sampler and, due to
\thref{prop:bbasis}, it is universal.
}

\change{
If one or more elements of $\{0,\ldots,\ulen-1\}$ are missing in
$\Delta(\spat)$, at least two of the rows of $\bm R$ are missing,
meaning that $\rank \bm R < 2\ulen-1$.  Then, $\rank \bm R=2\ulen-1$
iff $\{0,\ldots,\ulen-1\}\subset \Delta(\spat)$. From
\thref{prop:rmat}, $\spat$ defines an $\basisst$-covariance sampler
iff $\Delta(\spat)=\{0,\ldots,\ulen-1\}$. Now apply
\thref{prop:bbasis}.
}
\end{proof}

This theorem provides a very simple means to check whether $\spat$ is
universal or not. Interestingly, this is closely related to the
classical problem in number theory known as the \emph{sparse ruler
  problem}, or as the representation of integers by difference bases
(see~\cite{miller1971three,leech1956differences} and references
therein). Its application to array processing dates back to the
60's~\cite{moffet1968mra}. 

\begin{mydefinition}
\label{def:sr}
\emph{ A length-$(\lsrm -1)$ (linear) {sparse ruler} is a set $\lsr\subset
  \{0,1,\ldots,\lsrm-1\}$ satisfying $\Delta(\lsr)=\{0,1,\ldots,\lsrm-1\}$. It is
  called {minimal} if there exists no other length-$(\lsrm-1)$ sparse ruler
  with smaller cardinality. }
\end{mydefinition}

Intuitively, we may associate this set with a classical ruler (the
physical object) with some marks erased, which is still capable of
measuring all integer distances between 0 and its length using pairs
of marks. Two examples of minimal sparse rulers are shown in
Fig. \ref{fig:lsr}, where red dots correspond to the marks that have
not been erased.  Sparse rulers exist for all $\lsrm$, although they
are not necessarily unique. For instance, two different length-$10$
sparse rulers are $\{0, 1, 2, 3, 6, 10\}$ and $\{0,1,2,5,7,10\}$. The
most remarkable properties of a length-$(\lsrm-1)$ sparse ruler are
that the endpoints are always present, i.e., $\{0,\lsrm-1\}\subset
\lsr$, and that its reflection $(\lsrm-1)-\lsr = \{
(\lsrm-1)-\lsrel:~\lsrel\in \lsr\}$ is also a sparse ruler. Trivially,
if $\lsr$ is minimal, then $(\lsrm-1)-\lsr$ is also
minimal. Therefore, (minimal) sparse rulers exist at least in pairs
unless $\lsr = (\lsrm-1)-\lsr$. The cardinality $\lsrnel=|\lsr|$ of a
sparse ruler is lower bounded as
\begin{align}
\lsrnel\geq \frac{1}{2}+\sqrt{2(\lsrm-1)+\frac{1}{4}},
\end{align}
which follows directly from \eqref{eq:cdaub} and is only attained for
$\lsrm-1=0,1,3$ and 6 (see e.g.~\cite{linebarger1993sparsearrays}); or as
(see~\cite{redei1949representation,leech1956differences}):
\begin{align}
\label{eq:lsrub}
\lsrnel\geq  \sqrt{\tau  (\lsrm -1) },
\end{align}
where $\tau=\max_\theta{ 2(1-\frac{\sin\theta}{\theta})}\approx
2.4345$; and, if it is minimal it is upper bounded
by~\cite{pearson1990nearoptimal}:
\begin{align}
\label{eq:ubi}
\lsrnel\leq \left\lceil \sqrt{3(\lsrm-1)}~\right\rceil,~~~\lsrm-1 \geq 3.
\end{align}

Thus, in the non-periodic case ($\nblocks=1$), \thref{prop:1} reduces
our design problem to finding a length-$(\ulen-1)$ sparse ruler, for
which design algorithms abound.  A trivial example is
$\{0,\ldots,\ulen-1\}$, which clearly represents a universal sampler
since in that case $\by = \bx$. More sophisticated constructions were
discussed
in~\cite{redei1949representation,leech1956differences,pearson1990nearoptimal,wichmann1963difference,linebarger1993sparsearrays,pumphrey1993sparsearrays,pal2010nested}.
However, if the compression ratio is to be maximized, then one should
look for a \emph{minimal} sparse ruler, which is an exhaustive-search
problem. Fortunately, there exist tables for values of $\ulen-1$ up to
the order of 100. Although higher values of this parameter demand, in
principle, intensive computation, one may resort to the designs
in~\cite{pearson1990nearoptimal,wichmann1963difference,linebarger1993sparsearrays},
which provide nearly minimal rulers despite being really
simple.\removenow{\footnote{In any case, the design procedure chosen will not
  lead to any practical disadvantage in many cases since it does not
  affect the complexity of the sampling operation. }}

On the other hand, it is not clear how to design sampling patterns in
the periodic case ($\nblocks>1$) since periodicity needs to be enforced on
$\spat$. Before that, the next definition is required.

\begin{figure}[t]
 \centering
 \includegraphics[width=0.4\textwidth]{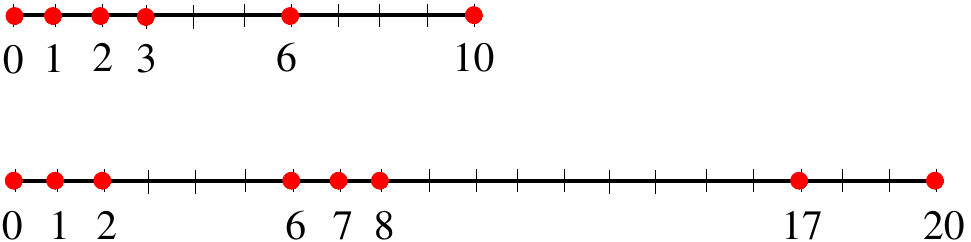}
 \caption{ Example of a length-10 minimal sparse ruler (above) and
   length-20 minimal sparse ruler (below).  }
 \label{fig:lsr}
\end{figure}

\begin{mydefinition}
\emph{ A length-$(\lsrm-1)$ periodic sparse ruler of period $\plen$, where
  $\plen$ divides $\lsrm$, is a set $\lsr\subset \{0,1,\ldots,\lsrm -1\}$
  satisfying two conditions:
\begin{enumerate}
\item if $\lsrel\in \lsr$, then $\lsrel+ \blockind \plen\in \lsr$ for all  $\blockind\in \zset$ such that
  $0\leq \lsrel+ \blockind \plen < \lsrm$
\item $\Delta(\lsr)=\{ 0,1,\ldots, \lsrm-1  \}$.
\end{enumerate}
It is called {minimal} if there exists no other periodic sparse ruler
with the same length and period but smaller cardinality.  }
\end{mydefinition}

Observe that any periodic sparse ruler is also a sparse ruler, whereas
the converse need not be true. Clearly, \thref{prop:1} could be
rephrased to say that $\spat$ is universal iff it is a
length-$(N\nblocks-1)$ periodic sparse ruler of period $N$. The
problem of designing {sparse} covariance samplers becomes that of
designing periodic sparse rulers. The next result simplifies this task
by stating that a length-$(N\nblocks-1)$ periodic sparse ruler of
period $N$ is indeed the concatenation of $\nblocks$ length-$(N-1)$
sparse rulers:

\begin{mytheorem}
\thlabel{prop:psr}
\emph{ A set $\spat$ is a periodic sparse ruler of length $N\nblocks-1$
  and period $N$ if and only if there exists a sparse ruler $\bspat$ of
  length $N-1$ such that }
\begin{align}
\label{eq:isetp}
\spat = \{ \bspatel + \blockind N:~~\bspatel\in
\bspat,~~\blockind=0,1,\ldots,\nblocks-1 \}.
\end{align}
\end{mytheorem}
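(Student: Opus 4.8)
The plan is to establish both implications of the equivalence, taking as the natural candidate for the period the ``first block'' of $\spat$, namely $\bspat := \spat \cap \intset{0}{N-1}$. Throughout I will exploit that any difference of two elements of $\bspat$ lies in $\intset{0}{N-1}$, so that $\Delta(\bspat)\subseteq\intset{0}{N-1}$ holds automatically and only the reverse inclusion demands work. This reduces the whole statement to (i) checking, in each direction, the structural decomposition of $\spat$ into shifted copies of $\bspat$, and (ii) transferring the full-ruler property between $\bspat$ and $\spat$.

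For the \textbf{sufficiency} direction I would assume that $\bspat$ is a sparse ruler of length $N-1$ and that $\spat$ has the stated form $\spat=\{\bspatel+\blockind N:\bspatel\in\bspat,\ \blockind=0,\ldots,\nblocks-1\}$, and then verify the two defining conditions of a periodic sparse ruler. The periodicity condition is routine: if $\uind=\bspatel+\blockind N\in\spat$ and $0\le \uind+\blockind' N<N\nblocks$, then $0\le\bspatel+(\blockind+\blockind')N<N\nblocks$ together with $0\le\bspatel\le N-1$ forces $\blockind+\blockind'\in\intset{0}{\nblocks-1}$, so $\uind+\blockind' N\in\spat$. For $\Delta(\spat)=\intset{0}{N\nblocks-1}$ I would take an arbitrary $\dif\in\intset{0}{N\nblocks-1}$, write $\dif=qN+r$ with $0\le r<N$ and $0\le q\le\nblocks-1$, choose $\bspatel_1,\bspatel_2\in\bspat$ realizing the within-block lag $r\in\Delta(\bspat)$, and realize $\dif$ as the gap between $\bspatel_1$ in block $0$ and $\bspatel_2$ in block $q$; the inclusion $\Delta(\spat)\subseteq\intset{0}{N\nblocks-1}$ is immediate from the range of $\spat$.

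The \textbf{necessity} direction carries the real content. Assuming $\spat$ is a periodic sparse ruler of length $N\nblocks-1$ and period $N$, I would first use condition~(1) of the periodic-sparse-ruler definition to obtain the decomposition: reducing any $\uind\in\spat$ modulo $N$ keeps it in $\spat$, so $(\uind)_N\in\bspat$ and $\uind=(\uind)_N+\lfloor\uind/N\rfloor N$, while conversely periodicity places each $\bspatel+\blockind N$ in $\spat$. It then remains to prove that $\bspat$ is a genuine sparse ruler, i.e.\ $\intset{0}{N-1}\subseteq\Delta(\bspat)$. The tempting argument---``every small lag of $\spat$ already occurs inside a single block''---fails, and I expect this to be \emph{the main obstacle}: a lag $r\in\intset{1}{N-1}$ of $\spat$ may be realizable only across a block boundary, where it contributes the complementary within-block lag $N-r$ rather than $r$ itself. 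To bypass this wrap-around I would instead examine the \emph{largest} lags. For each $s\in\intset{0}{N-1}$ the value $\dif=N(\nblocks-1)+s$ lies in $\intset{0}{N\nblocks-1}$, hence in $\Delta(\spat)$; writing a realizing pair as $\bspatel_i+\blockind_i N$, the sheer size of $\dif$ forces $\blockind_2-\blockind_1=\nblocks-1$ (the two extreme blocks), because $\bspatel_2-\bspatel_1\le N-1$ leaves no slack. This pins the within-block lag to $\bspatel_2-\bspatel_1=s$, so $s\in\Delta(\bspat)$; letting $s$ sweep $\intset{0}{N-1}$ yields $\intset{0}{N-1}\subseteq\Delta(\bspat)$ and closes the argument.
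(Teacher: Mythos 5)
Your proposal is correct and follows essentially the same route as the paper's own proof: the necessity direction hinges on exactly the paper's key observation that the lags $N(\nblocks-1)+s$ can only be realized across the two extreme blocks, which forces $s\in\Delta(\bspat)$. The only difference is that you spell out the sufficiency direction (which the paper dismisses as clear), and your explicit warning about the wrap-around pitfall for small lags is a nice touch but not a departure from the paper's argument.
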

\begin{proof}
See Appendix~\ref{sec:ppsr}.
\end{proof}

One of the consequences of \thref{prop:psr} is that increasing the
number of blocks in a periodic {sparse} sampler cannot improve the
compression ratio. For example, concatenating two equal length-$(N-1)$
minimal sparse rulers with $M$ elements results in a length-$(2N-1)$
sparse ruler with $2M$ elements. Note, however, that the situation is
different if the periodicity requirement is dropped. For instance, a
minimal length-$10$ sparse ruler has 6 elements, whereas a length-$21$
minimal sparse ruler has $8 < 6\times 2$ elements.

As a corollary of \thref{prop:psr}, we conclude that a minimal
periodic sparse ruler is the concatenation of minimal sparse
rulers. Thus, the problem of designing optimal {sparse} universal
covariance samplers (either periodic or non-periodic)  reduces
to designing a minimal length-$(N-1)$ sparse ruler $\bspat$.

 Table~\ref{tab:csr} illustrates the minimum value of $M = |\bspat|$
 (labeled as $M_\text{LSR}$) for several values of $N$, enabling us to
 obtain the optimum compression ratio for block lengths $N$ up to 60,
 which covers most practical cases.  For higher $N$, one may resort to
 another table, to a computer program, or to the asymptotic
 considerations in Sec.~\ref{sec:acr}. However, although there is no
 closed form expression for the maximum achievable compression ratio
 $\rho$, the bounds in \eqref{eq:lsrub} and \eqref{eq:ubi}~show~that
\begin{align}
\frac{N}{\left\lceil\sqrt{3(N-1)}~\right \rceil} \leq \rho \leq\frac{N}{\sqrt{\tau(N-1)}}.
\end{align}

\begin{table}[t]
\begin{center}
\begin{tabular}{|c |c |c |c |c |c |c |c |c |c |c |c |c |}
     \hline
 $N$ & 5 & 6 & 7 & 8 & 9 & 10 & 11 & 12 & 13 & 14 & 15 & 16 \\  \hline
 $M_\text{CSR}$ & 3 & 3 & 3 & 4 & 4 & 4 & 4 & 4 & 4 & 5 & 5 & 5 \\  \hline
 $M_\text{HLSR}$ & 3 & 3 & 3 & 4 & 4 & 4 & 4 & 4 & 4 & 5 & 5 & 5 \\  \hline
 $M_\text{LSR}$ & 4 & 4 & 4 & 5 & 5 & 5 & 6 & 6 & 6 & 6 & 7 & 7 \\  \hline
  \end{tabular}
\\[.1cm]
\begin{tabular}{|c |c |c |c |c |c |c |c |c |c |c |c |}
     \hline
 $N$ & 17 & 18 & 19 & 20 & 21 & 22 & 23 & 24 & 25 & 26 & 27 \\  \hline
 $M_\text{CSR}$ & 5 & 5 & 5 & 6 & 5 & 6 & 6 & 6 & 6 & 6 & 6 \\  \hline
 $M_\text{HLSR}$ & 5 & 5 & 5 & 6 & 6 & 6 & 6 & 6 & 6 & 6 & 6 \\  \hline
 $M_\text{LSR}$ & 7 & 7 & 8 & 8 & 8 & 8 & 8 & 8 & 9 & 9 & 9 \\  \hline
  \end{tabular}
\\[.1cm]
\begin{tabular}{|c |c |c |c |c |c |c |c |c |c |c |c |}
     \hline
 $N$ & 28 & 29 & 30 & 31 & 32 & 33 & 34 & 35 & 36 & 37 & 38 \\  \hline
 $M_\text{CSR}$ & 6 & 7 & 7 & 6 & 7 & 7 & 7 & 7 & 7 & 7 & 8 \\  \hline
 $M_\text{HLSR}$ & 7 & 7 & 7 & 7 & 7 & 7 & 7 & 7 & 8 & 8 & 8 \\  \hline
 $M_\text{LSR}$ & 9 & 9 & 9 & 10 & 10 & 10 & 10 & 10 & 10 & 10 & 11 \\  \hline
  \end{tabular}
\\[.1cm]
\begin{tabular}{|c |c |c |c |c |c |c |c |c |c |c |c |}
     \hline
 $N$ & 39 & 40 & 41 & 42 & 43 & 44 & 45 & 46 & 47 & 48 & 49 \\  \hline
 $M_\text{CSR}$ & 7 & 8 & 8 & 8 & 8 & 8 & 8 & 8 & 8 & 8 & 8 \\  \hline
 $M_\text{HLSR}$ & 8 & 8 & 8 & 8 & 8 & 8 & 8 & 8 & 8 & 9 & 9 \\  \hline
 $M_\text{LSR}$ & 11 & 11 & 11 & 11 & 11 & 11 & 12 & 12 & 12 & 12 & 12 \\  \hline
  \end{tabular}
\\[.1cm]
\begin{tabular}{|c |c |c |c |c |c |c |c |c |c |c |c |}
     \hline
 $N$ & 50 & 51 & 52 & 53 & 54 & 55 & 56 & 57 & 58 & 59 & 60 \\  \hline
 $M_\text{CSR}$ & 8 & 8 & 9 & 9 & 9 & 9 & 9 & 8 & 9 & 9 & 9 \\  \hline
 $M_\text{HLSR}$ & 9 & 9 & 9 & 9 & 9 & 9 & 9 & 9 & 9 & 9 & 10 \\  \hline
 $M_\text{LSR}$ & 12 & 12 & 13 & 13 & 13 & 13 & 13 & 13 & 13 & 13 & 14 \\  \hline
  \end{tabular}
  \end{center}
  \caption{Values of $M$ for a length-$(N-1)$  minimal circular sparse
    ruler ($M_\text{CSR}$), length-$\lfloor\frac{N}{2} \rfloor$
    minimal linear sparse ruler ($M_\text{HLSR}$) and length-$(N-1)$
    minimal linear sparse ruler ($M_\text{LSR}$).}
  \label{tab:csr}
\end{table}

\subsection{{Dense} Samplers}

Deriving conditions for universality of {dense} samplers is simpler
than for {sparse} samplers since most mathematical complexity
has been subsumed by \thref{prop:2}. Moreover, the results are simpler
and can be expressed in closed form.

\begin{mytheorem}
\thlabel{prop:rsu} \emph{ Let $\bphi$ be an $M\times N$ random matrix
  satisfying the hypotheses of \thref{prop:2}.  Then, $\bbphi=\bm
  I_{\nblocks}\otimes \bphi$ defines a universal covariance sampler with
  probability 1 if and only if}
\begin{align}
\label{eq:mbound}
M \geq \sqrt{
\frac{2N\nblocks-1}{2\nblocks-1}
}.
\end{align}
\end{mytheorem}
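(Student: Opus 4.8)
The plan is to derive \eqref{eq:mbound} by combining \thref{prop:bbasis} with \thref{prop:2}, after which only elementary algebra remains. First I would invoke \thref{prop:bbasis} to trade the notion of universality for a single concrete condition: $\bbphi$ is a universal covariance sampler if and only if it is an $\basis$-covariance sampler for a basis $\basis$ of the full Hermitian Toeplitz subspace $\sspace$. Since we established in Sec.~\ref{sec:rcs} that $\dimr\sspace = 2\ulen-1$, and here $\ulen = N\nblocks$, every such basis has cardinality $\nbasis = 2N\nblocks-1$. This is the crucial quantity that feeds into the dimension count.

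Next I would apply \thref{prop:2} to this particular basis. Because $\bphi$ satisfies the hypotheses of \thref{prop:2} and $\bbphi = \bm I_\nblocks\otimes\bphi$, that theorem asserts that, with probability one, $\bbphi$ is an $\basis$-covariance sampler if and only if $\nbasis \leq M^2(2\nblocks-1)$. Substituting $\nbasis = 2N\nblocks-1$ yields the condition $2N\nblocks-1 \leq M^2(2\nblocks-1)$; since $2\nblocks-1 > 0$, dividing and taking square roots gives \eqref{eq:mbound}, and chaining this with the equivalence inherited from \thref{prop:bbasis} completes the argument in both directions.

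The one point demanding care is consistency with the standing hypothesis $M\leq N$ in \thref{prop:2}, so that the statement is not vacuous. I would verify $\sqrt{(2N\nblocks-1)/(2\nblocks-1)} \leq N$, which reduces to $2N\nblocks-1 \leq N^2(2\nblocks-1)$, i.e. $0 \leq (N-1)\left[N(2\nblocks-1)-1\right]$, a product of two factors that are nonnegative for all integers $N\geq 1$, $\nblocks\geq 1$; hence an admissible integer $M$ in the stated range always exists. I would also note that the two directions of \thref{prop:2} carry different logical weight: when $M$ is too small, the dimension of the space of Hermitian block-Toeplitz matrices containing $\cbasis$ is exactly $M^2(2\nblocks-1)$, which then falls below $\nbasis$, so $\cbasis$ cannot be linearly independent and $\bbphi$ fails to be a covariance sampler deterministically, whereas the positive direction holds only almost surely. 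I do not anticipate any genuine obstacle beyond threading this probability-one qualifier correctly through the reduction supplied by \thref{prop:bbasis}.
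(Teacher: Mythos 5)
Your proposal is correct and follows essentially the same route as the paper: invoke \thref{prop:bbasis} to reduce universality to being an $\basis$-covariance sampler for a basis of $\sspace$, note that such a basis has cardinality $2N\nblocks-1$, and apply \thref{prop:2} to obtain $2N\nblocks-1\leq M^2(2\nblocks-1)$, which is \eqref{eq:mbound}. Your added check that the bound is compatible with $M\leq N$ is a reasonable extra sanity verification but not part of the paper's (very short) proof.
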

\begin{proof}
If $\basis$ is a basis for $\sspace$, then
$|\basis|=2\ulen-1=2N\nblocks-1$. From \thref{prop:2}, 
$\bbphi$ is an $\basis$-covariance sampler iff $2N\nblocks-1 \leq
M^2(2\nblocks-1)$, which is equivalent to
\eqref{eq:mbound}. Universality then follows from \thref{prop:bbasis}.
\end{proof}

 Expression \eqref{eq:mbound} can be interpreted as
 $M^2({2\nblocks-1}) \geq{2N\nblocks-1}$, where $2N\nblocks-1$ is the
 dimension of the uncompressed subspace and $M^2(2\nblocks-1)$ is the
 maximum dimension of a subspace of Hermitian block-Toeplitz
 matrices. Thus, this design provides optimal compression, which is
 achieved when
\begin{align}
M 
 = \left\lceil \sqrt{
\frac{2N\nblocks-1}{2\nblocks-1}
}\right\rceil, 
\end{align}
and given by
\begin{align}
\label{eq:rhonupb}
\rho = \frac{N}{M} \approx\sqrt{
\frac{(2\nblocks-1)N^2}{2N\nblocks-1}
}.
\end{align}

\section{Non-Universal Covariance Samplers}
\label{sec:nucs}

 Universal samplers are used when no structure exists or when it is
 unknown. However, when prior information is available, the values
 that $\bbsig$ can take on are restricted, allowing for larger
 compression ratios. This section analyzes this effect for the
 covariance subspaces introduced in Sec.~\ref{sec:rcs}. Since the
 Toeplitz subspace has already been considered in Sec.~\ref{sec:ucs},
 we proceed to analyze circulant and $\sndiag$-banded subspaces.

\subsection{Circulant Covariance Subspace}

\subsubsection{Sparse Samplers}
\label{sec:ccs:nus}
Restricting $\bsig$ to be circulant yields considerable compression
gains with respect to the Toeplitz case since the requirements on every
period of $\spat$ relax. In particular, every period must be a
\emph{circular} sparse ruler, which is a much weaker requirement than
that of being a \emph{linear} sparse ruler. This concept is related to
the \emph{modular} difference set defined next.  Recall from
Sec.~\ref{sec:not} that $( x )_\defmod$ denotes the remainder of the integer
division of $x$ by $\defmod$.
\begin{mydefinition}
\emph{ Let $\aset$ be a set of  integers.
  The $\defmod$-modular difference set of $\aset$, denoted as
  $\Delta_\defmod(\aset)$, is defined as
\begin{align}
\Delta_\defmod(\aset) = \{ \dif\geq 0: \exists a_1,a_2 \in \aset~ \st ~\dif=(a_2-a_1)_\defmod\}.
\end{align}
}
\end{mydefinition}
Clearly, for any $\aset \subset\{0,1,\ldots,\defmod-1\}$, we have that
$\Delta(\aset)\subset \Delta_\defmod(\aset)$, which means that
$|\Delta_\defmod(\aset)|$ is never less than $|\Delta(\aset)|$. Actually,
$\Delta_\defmod(\aset)$ will typically be larger than $\Delta(\aset)$ since
the fact that $\dif$ is in $\Delta_\defmod(\aset)$ implies that $\defmod-\dif$ is also in
that set.  For example, if $\aset=\{0,1,5\}$ and $\defmod=10$, then
$\Delta(\aset)=\{0,1,4,5\} \subset
\Delta_{10}(\aset)=\{0,1,4,5,6,9\}$.  Finally, the cardinality of
the modular difference set is upper bounded by noting that any pair
of elements in a set $\aset$ with cardinality $|\aset|$
generates at most two distances in $\Delta_\defmod(\aset)$:
\begin{align}
\label{eq:mdsb}
|\Delta_\defmod(\aset)| \leq  |\aset|\cdot (|\aset|-1) + 1.
\end{align}

Now 
it is possible
to state the requirements to compress circulant subspaces:
\begin{mytheorem}
\thlabel{prop:nuc} \emph{ Let $\basissc$ be given by
  \eqref{eq:bsetscdef}. Then, the set $\spat \subset\intset{0}{\ulen-1}$
  is an $\basissc$-covariance sampler if and only if $\Delta_\ulen(\spat)=\intset{0}{\ulen-1}$.}
\end{mytheorem}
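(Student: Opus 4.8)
The plan is to mirror the proof of \thref{prop:1}, replacing the linear difference set by the modular one and the standard Toeplitz basis by the circulant basis $\basissc$. By \thref{prop:ker} together with \thref{prop:lind}, the set $\spat$ defines an $\basissc$-covariance sampler if and only if $\ker\phic = \{\bm 0\}$, so it suffices to characterize this kernel. The key structural observation is that every $\bsig \in \spanvc \basissc$ is circulant, hence determined by its circular-diagonal values $r[c] = \sum_\basisind \alpha_\basisind\, r_\basisind[c]$, $c\in\{0,\ldots,\ulen-1\}$, where $r_\basisind[c]$ is the value of $\bsig_\basisind$ on the circular diagonal $c$ (i.e.\ the common value of all entries $(m,n)$ with $(n-m)_\ulen=c$). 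Since non-uniform sampling retains exactly the entries $(\bsig)_{i,j}$ with $i,j\in\spat$, and $(\bsig)_{i,j}=r[(j-i)_\ulen]$, the compressed matrix $\phic(\bsig)$ vanishes if and only if $r[c]=0$ for every $c$ arising as $(j-i)_\ulen$ with $i,j\in\spat$, that is, for every $c\in\Delta_\ulen(\spat)$.

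Next I would translate this into a rank condition, exactly as in the argument behind \thref{prop:rmat} but phrased through circular rather than linear diagonals. Collecting the equations $\sum_\basisind \alpha_\basisind r_\basisind[c]=0$, $c\in\Delta_\ulen(\spat)$, the kernel is trivial if and only if the matrix $\bm R_\text{circ}$ with rows indexed by $c\in\Delta_\ulen(\spat)$, columns indexed by the $\nbasis=\ulen$ elements of $\basissc$, and entries $r_\basisind[c]$, has full column rank $\ulen$. From \eqref{eq:bsetscdef} one reads off these entries: $\bm I_\ulen$ gives $r[0]=1$; each $\bm C_\ulenind$ gives $r[\ulenind]=r[\ulen-\ulenind]=1$; and each $\tbm C_\ulenind$ gives $r[\ulenind]=\jmath$, $r[\ulen-\ulenind]=-\jmath$, all remaining circular-diagonal values being zero. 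Thus, after reordering, $\bm R_\text{circ}$ is block diagonal: a $1\times1$ block equal to $1$ pairing $\bm I_\ulen$ with $c=0$, and for each $\ulenind$ a $2\times2$ block $\left[\begin{smallmatrix}1 & \jmath\\ 1 & -\jmath\end{smallmatrix}\right]$ on the rows $\{\ulenind,\ulen-\ulenind\}$ and columns $\{\bm C_\ulenind,\tbm C_\ulenind\}$, whose determinant $-2\jmath$ is nonzero.

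Finally I would exploit the symmetry of the modular difference set: for $c\neq0$, one has $c\in\Delta_\ulen(\spat)$ if and only if $\ulen-c\in\Delta_\ulen(\spat)$, since the ordered pairs $(a_1,a_2)$ and $(a_2,a_1)$ produce the complementary residues $(a_2-a_1)_\ulen$ and $\ulen-(a_2-a_1)_\ulen$. Hence if $\Delta_\ulen(\spat)=\{0,\ldots,\ulen-1\}$ all rows of $\bm R_\text{circ}$ are present, every $2\times2$ block and the scalar block survive, and $\rank\bm R_\text{circ}=\ulen$, so $\spat$ is a covariance sampler. Conversely, if some $c_0\neq0$ is absent, then so is $\ulen-c_0$, deleting both rows that support the columns $\bm C_\ulenind,\tbm C_\ulenind$ with $\ulenind\in\{c_0,\ulen-c_0\}$; these columns become zero and $\bm R_\text{circ}$ is rank deficient, so $\spat$ is not a covariance sampler. (For nonempty $\spat$ one always has $0\in\Delta_\ulen(\spat)$, so only diagonals $c_0\geq1$ can be missing.) This establishes the claimed equivalence.

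Beyond the bookkeeping, the step requiring most care is the even-$\ulen$ case, where the self-complementary circular diagonal $\ulen/2$ is carried by the single basis element $\tbasismatr_{\ulen/2}$, contributing one real entry $r[\ulen/2]=1$; its column vanishes precisely when $\ulen/2\notin\Delta_\ulen(\spat)$, and since $\ulen/2$ is its own complement the symmetry argument degenerates there to a $1\times1$ block, which I would check yields the same conclusion. I expect no deep obstacle once the circulant structure is made explicit; the one genuine subtlety is verifying that passing from linear to circular diagonals identifies the observed index set as $\Delta_\ulen(\spat)$ rather than $\Delta(\spat)$, which is exactly where the wrap-around of circulant matrices relaxes the sparse-ruler requirement into a circular one.
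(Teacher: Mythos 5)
Your proof is correct and follows essentially the same route as the paper's: both reduce the question to the full-column-rank of a matrix whose columns record the (paired) diagonal values of the circulant basis elements on the observed lags, and both hinge on the observation that the column pair associated with $\bm C_\ulenind,\tbm C_\ulenind$ dies exactly when \emph{both} residues $\ulenind$ and $\ulen-\ulenind$ are unobserved. The only cosmetic difference is that the paper instantiates its general rank criterion (\thref{prop:rmat}, rows indexed by the linear difference set $\Delta(\spat)$ plus conjugates) and then translates the rank-drop condition into $\Delta_\ulen(\spat)\neq\intset{0}{\ulen-1}$, whereas you index rows by the modular difference set from the outset; you also treat the even-$\ulen$ self-paired diagonal explicitly, which the paper leaves to the reader.
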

\begin{IEEEproof}
See Appendix~\ref{sec:pnuc}.
\end{IEEEproof}

\thref{prop:nuc} is therefore the analogue of \thref{prop:1} for circulant
subspaces. However, in this case the conclusion does not lead  to a
\emph{linear} sparse ruler but to a \emph{circular} one:
\begin{mydefinition}
\label{def:csr}
\emph{ A length-$(\csrm-1)$ circular (or modular) {sparse ruler} is a set
  $\csr\subset \{0,\ldots,\csrm-1\}$ satisfying
  $\Delta_\csrm(\csr)=\{0,\ldots,\csrm-1\}$; and it is said to be {minimal}
  if no other length-$(\csrm-1)$ circular sparse ruler exists with smaller
  cardinality.}
\end{mydefinition}

As with linear sparse rulers, 
a geometric interpretation is possible in terms of a physical
ruler. Suppose that we wrap around a conventional ruler (made of some
flexible material) until the first mark and the last mark lie at unit
distance, thus making a \emph{circular ruler}. Now assume that some of
the marks are erased, but that it is still possible to measure all
distances between $0$ and the length of the original ruler using pairs
of marks. The advantage with respect to a linear ruler is that any
pair of marks provides, in general, \emph{two} distances, which are
the lengths of the two circular segments that they define.
Two length-$20$ circular sparse rulers are
illustrated in Fig.~\ref{fig:csr}, the one on the left being minimal.
Other examples of length-$(\csrm-1)$ circular sparse rulers are
$\{0,\ldots,\csrm-1\}$ and $\{0,\ldots,\lfloor\frac{\csrm}{2}
\rfloor\}$, which are referred to as \emph{trivial} circular sparse
rulers.

\begin{figure}[t]
 \centering

\begin{minipage}[b]{.23\textwidth}
 \centering
 \includegraphics[width=.9\textwidth]{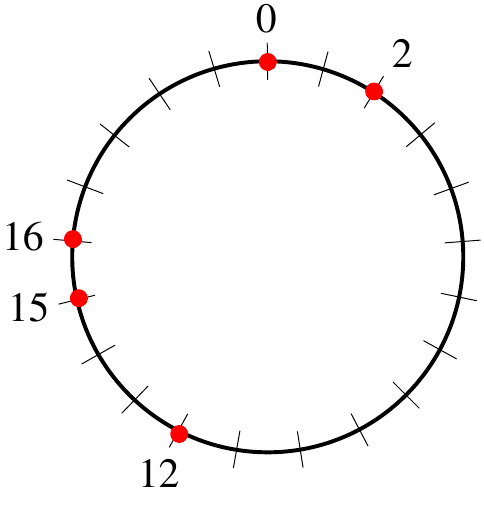}
 \subcaption{Example of length-20 minimal circular sparse ruler.\\~}
 \label{fig:csra}
\end{minipage}
~~
\begin{minipage}[b]{.23\textwidth}
 \centering
 \includegraphics[width=.9\textwidth]{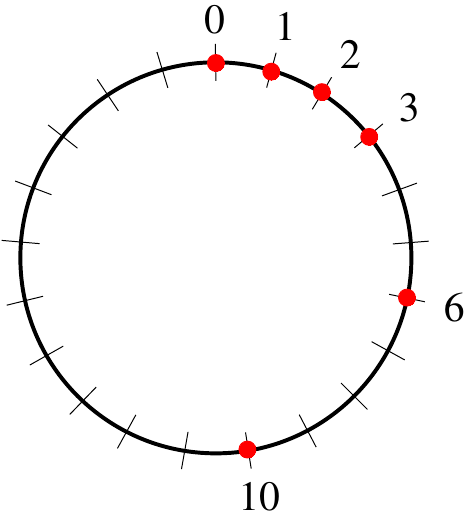}
 \subcaption{Example of length-20 circular sparse ruler designed with a length-10 \emph{linear} sparse ruler.}
 \label{fig:csrb}
\end{minipage}

  \caption{Comparison of two length-20 circular sparse rulers. The
    ruler on the left, with $5$ elements, is minimal whereas the
 one  on the right, with $6$ elements, is not. }
  \label{fig:csr}
\end{figure}


Circular sparse rulers, also known as \emph{difference cycles}, were
analyzed by the mathematical community using finite group theory and
additive number theory~(see \cite{miller1971three} for an overview of
the main results).  Among the most remarkable properties, we mention that
a reflection of a circular sparse ruler is also a circular sparse
ruler (see Sec.~\ref{sec:ucs:nus}) and that any circular rotation of a
circular sparse ruler $\csr$, defined as
\begin{align}
\csr_{(i)} =\{ (\csrel + i)_\csrm:~~\csrel\in \csr\},~~~i\in\zset,
\end{align}
is also a circular sparse ruler. Moreover, since $\Delta(\csr)\subset
\Delta_\csrm(\csr)$ for any $\csr\subset\intset{0}{\csrm-1}$, any
\emph{linear} sparse ruler is also a \emph{circular} sparse ruler.
Hence, the cardinality of a \emph{minimal} circular sparse ruler can
never be greater than the cardinality of a \emph{minimal} linear
sparse ruler if both have the same length. It is possible to go even
further by noting that any length-$\lfloor \frac{\csrm}{2}\rfloor$
linear sparse ruler is also a length-$(\csrm-1)$ circular sparse
ruler. For example, Fig. \ref{fig:csrb} shows a length-20 circular
sparse ruler constructed with a length-10 \emph{linear} sparse
ruler. From this observation and \eqref{eq:ubi}, we obtain
\begin{align}
\label{eq:ubcmcsr}
|\csr| \leq \left \lceil \sqrt{3 \left \lfloor \frac{\csrm}{2}\right\rfloor }~\right \rceil
\end{align}
for any minimal circular sparse ruler. On the other hand, expression
\eqref{eq:mdsb} yields
\begin{align}
\label{eq:lbcmcsr}
|\csr|\geq \frac{1}{2} + \sqrt{ \csrm - \frac{3}{4} }.
\end{align}

A length-$(\csrm-1)$ circular sparse ruler can be designed in several
ways.  For certain values of $\csrm$, minimal rulers attaining
\eqref{eq:lbcmcsr} can be obtained in closed form
(see~\cite[Sec.~III-B]{xia2005welch} for an overview; also
\cite{singer1938projective}). Other cases may require exhaustive
search, which motivates sub-optimal designs. Immediate choices are
length-$(\csrm-1)$ or length-$\lfloor \frac{\csrm}{2}\rfloor$ minimal
\emph{linear} sparse rulers~\cite{ariananda2012psd}. In fact, the
latter provides optimal solutions for most values of $\csrm$ below 60
(see Table~\ref{tab:csr}). 
Further alternatives include
\cite{dominguezjimenez2012design}.

Circular sparse rulers seem to have been introduced in signal/array
processing in~\cite{romero2013covariancesampling} and used later in
\cite{krieger2013multicosetarrays,ariananda2013periodogram,dominguezjimenez2013circular}.
\thref{prop:nuc} basically states that a covariance sampler for
circulant subspaces is a length-$(\ulen-1)$ circular sparse ruler,
which gives a practical design criterion just for the non-periodic
case. We now move on to introduce periodicity:
\begin{mydefinition}
\emph{ A length-$(\csrm-1)$ periodic circular sparse ruler of period
  $\plen$, where $\plen$ divides $\csrm$, is a set $\csr\subset
  \{0,1,\ldots,\csrm -1\}$ satisfying:
\begin{enumerate}
\item if $\csrel\in \csr$, then $\csrel+\blockind \plen\in \csr$ for all  $\blockind \in \zset$ such that
  $0\leq \csrel+\blockind \plen < \csrm$;
\item $\Delta_{\csrm}(\csr)=\{ 0,1,\ldots, \csrm-1  \}$.
\end{enumerate}
It is called {minimal} if there is no other periodic circular
sparse ruler with the same length and period but smaller cardinality.
}
\end{mydefinition}

Hence, \thref{prop:nuc} could be rephrased to say that $\spat$ is an
$\basissc$-covariance sampler iff it is a length-$(N\nblocks-1)$
periodic circular sparse ruler of period $N$. Although designing these
rulers may seem difficult, the next result simplifies this task by
stating that every period is, indeed, a circular sparse ruler.
\begin{mytheorem}
\thlabel{prop:cpsr}
\emph{ A set $\spat$ is a  periodic circular sparse ruler of length $N\nblocks-1$
  and period $N$ if and only if there exists a circular sparse ruler $\bspat$ of
  length $N-1$ such that }
\begin{align}
\spat = \{ \bspatel + \blockind N:~~\bspatel\in
\bspat,~~\blockind=0,1,\ldots,\nblocks-1 \}
\end{align}
\end{mytheorem}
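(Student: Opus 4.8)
The plan is to mirror the structure of the linear analog \thref{prop:psr}, replacing ordinary differences by modular ones and exploiting that $N$ divides $N\nblocks$. The backbone is the elementary observation that reduction modulo $N\nblocks$ followed by reduction modulo $N$ coincides with reduction modulo $N$; hence for any two candidate elements $a_i=\bspatel_i+\blockind_i N$ one has $(a_2-a_1)_N=(\bspatel_2-\bspatel_1)_N$, so the residues modulo $N$ of the global modular difference set $\Delta_{N\nblocks}(\spat)$ are governed entirely by the within-period modular difference set $\Delta_N(\bspat)$. First I would dispose of the periodicity requirement: Condition~1 in the definition of a periodic circular sparse ruler says precisely that membership of $\spat$ in $\intset{0}{N\nblocks-1}$ depends only on the residue modulo $N$; equivalently, setting $\bspat:=\spat\cap\intset{0}{N-1}$, the set $\spat$ is exactly the periodic extension displayed in the statement, and conversely any set of that form satisfies Condition~1. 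Thus the theorem reduces to the single equivalence $\Delta_{N\nblocks}(\spat)=\intset{0}{N\nblocks-1}$ if and only if $\Delta_N(\bspat)=\intset{0}{N-1}$, i.e.\ $\bspat$ is a length-$(N-1)$ circular sparse ruler.

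For the forward (``only if'') direction I would take an arbitrary $r\in\intset{0}{N-1}$. Since Condition~2 gives $r\in\Delta_{N\nblocks}(\spat)$, there exist $a_1,a_2\in\spat$ with $(a_2-a_1)_{N\nblocks}=r$; reducing further modulo $N$ and invoking the backbone identity yields $(\bspatel_2-\bspatel_1)_N=r$, whence $r\in\Delta_N(\bspat)$. Because $\bspat\subset\intset{0}{N-1}$ forces the reverse inclusion $\Delta_N(\bspat)\subseteq\intset{0}{N-1}$, we conclude $\Delta_N(\bspat)=\intset{0}{N-1}$, so $\bspat$ is a circular sparse ruler.

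For the ``if'' direction I would fix an arbitrary target $\dif\in\intset{0}{N\nblocks-1}$ and write it uniquely as $\dif=r+qN$ with $r\in\intset{0}{N-1}$ and $q\in\intset{0}{\nblocks-1}$. Using that $\bspat$ is a circular sparse ruler, pick $\bspatel_1,\bspatel_2\in\bspat$ with $(\bspatel_2-\bspatel_1)_N=r$, and then choose block indices $\blockind_1,\blockind_2\in\intset{0}{\nblocks-1}$ so that $(\bspatel_2-\bspatel_1)+(\blockind_2-\blockind_1)N$ reduces modulo $N\nblocks$ to $\dif$; this shows $\dif\in\Delta_{N\nblocks}(\spat)$, and since $\dif$ was arbitrary and the reverse inclusion is automatic, Condition~2 holds.

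The step I expect to require the most care is this last construction, because the within-period difference $\bspatel_2-\bspatel_1$ need not be nonnegative: it equals $r$ when $\bspatel_2\ge\bspatel_1$ but $r-N$ when $\bspatel_2<\bspatel_1$, which shifts the block offset needed to reach the $qN$ part by one and forces a wraparound modulo $N\nblocks$. One must verify that in both cases a legal pair $(\blockind_1,\blockind_2)$ in $\intset{0}{\nblocks-1}$ exists realizing $\blockind_2-\blockind_1\equiv q$ (resp.\ $q+1$) modulo $\nblocks$, and that the subsequent reduction modulo $N\nblocks$ returns exactly $\dif$ rather than another representative. This is precisely where the circular (modular) nature of the problem is used, in contrast to the linear setting of \thref{prop:psr}; the remainder is routine bookkeeping with residues.
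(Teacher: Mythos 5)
Your proposal is correct, and for the ``if'' direction it follows essentially the same route as the paper: decompose the target distance as $\dif = r + qN$, pick a within-period pair realizing $r$ modulo $N$, and then split into the two cases $\bspatel_2 \geq \bspatel_1$ (difference $r$) and $\bspatel_2 < \bspatel_1$ (difference $r-N$), the latter forcing a block offset of $q+1$ and, when $q=\nblocks-1$, a wraparound modulo $N\nblocks$. This is exactly the case structure (C1/C2 and the special handling of the last block index) in the paper's appendix, just organized by target $\dif\in\intset{0}{N\nblocks-1}$ rather than by iterating $\blockind$ over $\intset{0}{\nblocks-1}$ for each $\dif\in\intset{0}{N-1}$.

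Where you genuinely depart from the paper is the ``only if'' direction. The paper proves it by a second case analysis (its conditions C1$'$/C2$'$, each with subcases on whether $\spatel_2$ lies in the first period), explicitly rewriting $\spatel_1,\spatel_2$ in terms of period representatives and recomputing residues in each branch. You instead observe that since $N$ divides $N\nblocks$, reduction modulo $N\nblocks$ followed by reduction modulo $N$ is reduction modulo $N$, so $(a_2-a_1)_N=(\bspatel_2-\bspatel_1)_N$ for any $a_i=\bspatel_i+\blockind_i N$; hence any $r\in\intset{0}{N-1}$ realized in $\Delta_{N\nblocks}(\spat)$ is automatically realized in $\Delta_N(\bspat)$. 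This one-line compatibility argument replaces the paper's four-way branching and is, in my view, the cleaner formulation; it also makes transparent why the circular setting collapses so nicely onto the period, in contrast to the linear case of \thref{prop:psr} where the top $N$ lags must be isolated before the block offsets can be pinned down. Your reduction of Condition~1 of the definition to the statement that $\bspat:=\spat\cap\intset{0}{N-1}$ generates $\spat$ by periodic extension is also correct and matches the paper's implicit use of that fact. No gaps.
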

\begin{proof}
See Appendix~\ref{sec:pcpsr}.
\end{proof}

\removenow{
\thref{prop:cpsr} is the dual of \thref{prop:psr} for circular sparse
rulers. Both theorems establish an interesting relationship, which can
be informally stated by saying that a periodic sparse ruler is a
concatenation of sparse rulers of the same nature, that is, a periodic
\emph{linear} sparse ruler is the concatenation of \emph{linear}
sparse rulers, whereas a periodic \emph{circular} sparse ruler is the
concatenation of \emph{circular} sparse rulers.
}

Table~\ref{tab:csr} reveals that the cardinality $M$ of a minimal
circular sparse ruler is not monotone with $N$. For example, minimal
length-19 circular sparse rulers have 6 elements whereas minimal
length-20 circular sparse rulers have 5 ele\-ments
(see~\cite{miller1971three} for a proof).  Table~\ref{tab:csr} also
illustrates the compression gain due to the knowledge that $\bsig$ is
circulant. For example, when $N=60$, a universal sampler has a
compression ratio of $\frac{N}{M} = \frac{60}{14}\approx 4.28$,
whereas a covariance sampler for circulant subspaces has a compression
ratio of $\frac{N}{M}=\frac{60}{9}\approx 6.67$.

Although  maximum compression ratios cannot be expressed in closed
form, simple bounds follow from 
\eqref{eq:ubcmcsr} and \eqref{eq:lbcmcsr}:
\begin{align}
\frac{N}{\left\lceil \sqrt{3\left\lfloor\frac{N}{2}\right\rfloor}~\right\rceil} \leq \rho \leq \frac{2N}{1+\sqrt{4N-3}}.
\end{align}

\subsubsection{{Dense} Samplers}
As in universal sampling, designing {dense} samplers is much easier
than designing {sparse} samplers. The following corollary of
\thref{prop:2} follows by noting that any basis for the circulant
subspace has $\ulen=N\nblocks$ elements.
\begin{mycorollary}
\emph{ Let $\bphi$ be an $M\times N$ random matrix satisfying the
  hypotheses of \thref{prop:2} and let $\basissc$ be given by
  \eqref{eq:bsetscdef}.  Then, with probability one, the matrix
  $\bbphi=\bm I_{\nblocks}\otimes \bphi$ defines an $\basissc$-covariance
  sampler if and only if }
\begin{align}
M \geq \sqrt\frac{N\nblocks}{2\nblocks-1}.
\end{align}
\end{mycorollary}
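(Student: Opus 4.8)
The plan is to recognize this corollary as an immediate specialization of \thref{prop:2}. The essential feature of that theorem is that, for a random $\bphi$ drawn from a distribution absolutely continuous with respect to the Lebesgue measure, the necessary and sufficient condition for $\bbphi = \bm I_\nblocks\otimes\bphi$ to be an $\basis$-covariance sampler (with probability one) depends on the set $\basis$ \emph{only} through its cardinality $\nbasis$, namely $\nbasis \leq M^2(2\nblocks-1)$. Thus the only task is to evaluate the cardinality of the circulant basis $\basissc$ in \eqref{eq:bsetscdef} and substitute it into this inequality.

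First I would check that $\basissc$ is a valid input to \thref{prop:2}: each matrix in \eqref{eq:bsetscdef} is HT of size $\ulen\times\ulen$ with $\ulen = N\nblocks$, and the set is linearly independent, so it is a bona fide HT basis set, while the hypothesis $M\leq N$ is already inherited from those of \thref{prop:2}. Next I would count the elements. For $\ulen$ odd, $\basissc$ comprises the identity together with $\tfrac{\ulen-1}{2}$ matrices of type $\bm C$ and $\tfrac{\ulen-1}{2}$ of type $\tbm C$, giving $1 + (\ulen-1) = \ulen$; for $\ulen$ even it comprises the identity, $\tfrac{\ulen}{2}-1$ of each type, and the extra matrix $\tbasismatr_{\ulen/2}$, giving $1 + 2\bigl(\tfrac{\ulen}{2}-1\bigr) + 1 = \ulen$. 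In both cases $\nbasis = |\basissc| = \ulen = N\nblocks$, which is precisely the fact noted just before the statement.

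Substituting $\nbasis = N\nblocks$ into the condition of \thref{prop:2} shows that $\bbphi$ is an $\basissc$-covariance sampler with probability one if and only if $N\nblocks \leq M^2(2\nblocks-1)$; solving for $M$ then yields $M \geq \sqrt{N\nblocks/(2\nblocks-1)}$, which is the claim. There is no genuine obstacle at this level: the entire analytic content --- the genericity argument guaranteeing that the compressed matrices $\bbsig_\basisind = \bbphi\,\bsig_\basisind\,\bbphi^H$ are linearly independent exactly when the dimension count $\nbasis \leq M^2(2\nblocks-1)$ permits --- is already discharged by \thref{prop:2}, so the corollary reduces to the cardinality bookkeeping above.
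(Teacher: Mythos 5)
Your proof is correct and follows exactly the paper's route: the paper likewise treats this as an immediate consequence of \thref{prop:2}, noting that any basis for the circulant subspace has $\ulen=N\nblocks$ elements and substituting $\nbasis=N\nblocks$ into the condition $\nbasis\leq M^2(2\nblocks-1)$. Your explicit cardinality check for both the odd and even cases of $\ulen$ is a slightly more detailed version of the same bookkeeping.
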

The optimum compression ratio is, therefore,
\begin{align}
\rho =\frac{N}{M} \approx \sqrt{\frac{(2\nblocks-1)N}{\nblocks}}.
\end{align}
For large $\nblocks$, this represents an approximate gain of $\sqrt 2$ with
respect to the universal case.

\subsection{$\sndiag$-banded Covariance Subspace}
\label{seq:bcm}

\subsubsection{Sparse samplers}

The prior knowledge that $\bsig$ is $\sndiag$-banded may also provide
compression gains.  In particular, we will see that, for {sparse}
samplers, $\sndiag$-banded subspaces with $N\leq \sndiag\leq
N(\nblocks-1)$ are compressed like circulant subspaces.
\begin{mytheorem}
\thlabel{prop:3} \emph{ Let $\basissb$ be given by \eqref{eq:bsetsbdef}
with $N\leq \sndiag\leq N(\nblocks-1)$. Then, the set 
\begin{align}
  \spat = \{ \bspatel+\blockind N,~\bspatel\in\bspat, \blockind=0,1,\ldots,\nblocks-1\},
\end{align}
where $\bspat \subset\intset{0}{N-1}$, defines an
$\basissb$-covariance sampler iff $\bspat$ is a length-$(N-1)$
circular sparse ruler.
}
\end{mytheorem}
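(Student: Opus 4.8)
The plan is to reduce the claim to a purely combinatorial condition on the difference set $\Delta(\spat)$ by means of \thref{prop:rmat}, and then to translate that condition into a statement about the single period $\bspat$ using the periodic structure of $\spat$.

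First I would invoke \thref{prop:rmat}: $\spat$ defines an $\basissb$-covariance sampler if and only if $\rank \bm R = \nbasis = 2\sndiag+1$. The correlation vectors of $\basissb$ are immediate, since the first column of $\bm I_\ulen$ is $\bm e_0$, that of $\tbasismatr_\uind$ is $\bm e_\uind$, and that of $\tbasismati_\uind$ is $-\jmath\,\bm e_\uind$; thus all of them are supported on the diagonals $0,1,\ldots,\sndiag$. After restricting to the rows indexed by $\Delta(\spat)$ and stacking each $\rdel$ above its conjugate, the two columns arising from $\tbasismatr_\uind$ and $\tbasismati_\uind$ occupy only the pair of rows associated with diagonal $\uind$ and form the block $\left[\begin{smallmatrix}1 & -\jmath\\ 1 & \jmath\end{smallmatrix}\right]$, which is nonsingular exactly when $\uind\in\Delta(\spat)$ and otherwise vanishes, while the $\bm I_\ulen$ column sits alone on the rows for diagonal $0$. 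Since distinct diagonals use disjoint rows, this displays $\bm R$ as block diagonal and yields $\rank\bm R = 1 + 2\,|\{1,\ldots,\sndiag\}\cap\Delta(\spat)|$, mirroring the argument in the proof of \thref{prop:1} but truncated at diagonal $\sndiag$. Hence the sampler condition is equivalent to $\{0,1,\ldots,\sndiag\}\subset\Delta(\spat)$.

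The remaining and central task is the combinatorial equivalence: under $N\leq\sndiag\leq N(\nblocks-1)$, I must show that $\{0,\ldots,\sndiag\}\subset\Delta(\spat)$ holds if and only if $\Delta_N(\bspat)=\intset{0}{N-1}$. Each element of $\spat$ is $\bspatel+\blockind N$ with $\bspatel\in\bspat$ and $0\leq\blockind\leq\nblocks-1$, so a difference reads $\dif = (\bspatel_2-\bspatel_1)+(\blockind_2-\blockind_1)N$; writing $\dif = qN+r$ with $r=(\dif)_N$, membership $\dif\in\Delta(\spat)$ splits into the case $r\in\Delta(\bspat)$ realized with block gap $q$, or the case $N-r\in\Delta(\bspat)$ realized with block gap $q+1$. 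For the direction ($\Rightarrow$) I would take $\dif=r$ for each $r\in\{1,\ldots,N-1\}$, which is admissible because $r\le N-1\le\sndiag$; then $\dif\in\Delta(\spat)$, and reducing the identity above modulo $N$ gives $(\bspatel_2-\bspatel_1)_N=r$, so $r\in\Delta_N(\bspat)$ and thus $\Delta_N(\bspat)=\intset{0}{N-1}$.

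The direction ($\Leftarrow$) is where the hypotheses on $\sndiag$ are indispensable, and the bookkeeping of block indices is the one delicate point I expect to be the main obstacle. Given $\Delta_N(\bspat)=\intset{0}{N-1}$ and any $\dif=qN+r\in\{0,\ldots,\sndiag\}$, the upper bound $\sndiag\le N(\nblocks-1)$ forces $q\le\nblocks-1$, so whenever $r\in\Delta(\bspat)$ the block gap $q$ is attainable. In the complementary case $N-r\in\Delta(\bspat)$ with $r\ge1$, the needed block gap is $q+1$, requiring $q\le\nblocks-2$; the only dangerous situation $q=\nblocks-1$ would force $\dif=(\nblocks-1)N+r\le N(\nblocks-1)$, hence $r=0$, contradicting $r\ge1$, so this case never obstructs the construction. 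Consequently every $\dif\le\sndiag$ belongs to $\Delta(\spat)$. Combining both equivalences with Definition~\ref{def:csr} identifies the covariance-sampler condition with $\bspat$ being a length-$(N-1)$ circular sparse ruler. To close, I would verify the boundary residues ($r=0$ and the endpoint $\dif=\sndiag$) and the degenerate value $\nblocks=1$ to confirm that no case has been overlooked.
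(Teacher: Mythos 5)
Your proposal is correct and follows essentially the same route as the paper: reduce via \thref{prop:rmat} to the condition $\{0,\ldots,\sndiag\}\subset\Delta(\spat)$, then translate this into $\Delta_N(\bspat)=\intset{0}{N-1}$ using the bounds $N\leq\sndiag\leq N(\nblocks-1)$ and the same case split on whether a residue is realized directly or through a wrap-around (block gap $q$ versus $q+1$). Your explicit rank computation for $\bm R$ and the mod-$N$ reduction in the forward direction are slightly more detailed than the paper's corresponding steps, but the substance is identical.
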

\begin{IEEEproof}
See Appendix~\ref{sec:pprop:3}. 
\end{IEEEproof}

Observe that the condition $\sndiag\leq N(\nblocks-1)$ is a mild
assumption since we are only requiring that the last $N-1$ lags of the
associated autocorrelation sequence be zero.\footnote{Strictly
  speaking, we only need the lags $N\nblocks-N+1$ through
  $N\nblocks-1$ to be zero since the lags greater than $N\nblocks-1$
  are not relevant in the model.}  Note as well that other cases
rather than $N\leq \sndiag\leq N(\nblocks-1)$ may be considered,
resulting in different conclusions.  For example, in the non-periodic
case ($\nblocks=1$) it can be shown from \thref{prop:rmat} that the
only requirement on $\spat$ 
sampler is that $\Delta(\spat) = \intset{0}{\sndiag}$.

From \thref{prop:3} and \thref{prop:cpsr}, it follows that
$\spat$ must be a length-$(N\nblocks-1)$ periodic circular sparse
ruler of period $N$, which means that samplers for $\sndiag$-banded
subspaces mimic those for circulant subspaces. Thus, one should apply
the design and compression ratio considerations from
Sec.~\ref{sec:ccs:nus}. Interestingly, note that the latter does not
depend on $\sndiag$ provided that this parameter remains within the
aforementioned limits.

\subsubsection{{Dense} Samplers}
From \thref{prop:2} and noting that $\sndiag$-banded subspaces have
dimension $2\sndiag+1$ we obtain:
\begin{mycorollary}
\emph{ Let $\bphi$ be an $M\times N$ random matrix satisfying the
  hypotheses of \thref{prop:2} and let $\basissb$ be given by
  \eqref{eq:bsetsbdef}.  Then, with probability one, the matrix
  $\bbphi=\bm I_{\nblocks}\otimes \bphi$ defines an $\basissb$-covariance
  sampler if and only if }
\begin{align}
M \geq \sqrt\frac{2\sndiag+1}{2\nblocks-1}.
\end{align}
\end{mycorollary}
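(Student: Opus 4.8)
The plan is to invoke \thref{prop:2} directly; the only preliminary work is to identify the cardinality of the banded basis. From \eqref{eq:bsetsbdef}, the set $\basissb$ consists of the identity $\bm I_\ulen$ together with the $2\sndiag$ off-diagonal generators $\tbasismatr_1,\ldots,\tbasismatr_\sndiag$ and $\tbasismati_1,\ldots,\tbasismati_\sndiag$. Since $\basissb\subset\basisst$ and $\basisst$ is a linearly independent set of HT matrices, $\basissb$ is itself a linearly independent set of HT matrices, of cardinality $\nbasis=2\sndiag+1$. Hence $\basissb$ satisfies exactly the structural hypotheses that \thref{prop:2} places on the basis set $\basis$.

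With this value of $\nbasis$ in hand, \thref{prop:2} asserts that, with probability one, $\bbphi=\bm I_{\nblocks}\otimes\bphi$ defines an $\basissb$-covariance sampler if and only if $\nbasis\leq M^2(2\nblocks-1)$, i.e.
\begin{align}
2\sndiag+1\leq M^2(2\nblocks-1).
\end{align}
Since $2\nblocks-1>0$, dividing by $2\nblocks-1$ and taking the nonnegative square root yields the claimed bound $M\geq\sqrt{(2\sndiag+1)/(2\nblocks-1)}$. Both implications are preserved because \thref{prop:2} is itself an ``if and only if'' statement, so no separate argument is needed for necessity and sufficiency.

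There is essentially no obstacle here: all of the measure-theoretic content---that the set of matrices $\bphi$ for which $\phic$ is singular has Lebesgue measure zero when the dimension count is favourable, and that the sampler fails deterministically otherwise---is already subsumed in \thref{prop:2}. The only points worth noting are the elementary dimension count $\nbasis=2\sndiag+1$ above, and that the requirement $M\leq N$ of \thref{prop:2} is inherited automatically, since by assumption $\bphi$ satisfies the hypotheses of that theorem.
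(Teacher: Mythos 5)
Your proposal is correct and follows essentially the same route as the paper, which simply observes that the dimension of the $\sndiag$-banded subspace is $2\sndiag+1$ and reads off the condition $2\sndiag+1\leq M^2(2\nblocks-1)$ from \thref{prop:2}. Your additional remarks (linear independence of $\basissb$ as a subset of $\basisst$, and the preservation of both implications) are accurate but do not change the argument.
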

According to this result, the maximum compression ratio 
is: 
\begin{align}
\rho =\frac{N}{M} \approx \sqrt\frac{(2\nblocks-1)N^2}{2\sndiag+1},
\end{align}
which clearly improves the ratio in \eqref{eq:rhonupb} since
$\sndiag\leq N\nblocks-1$.

\section{Asymptotic Regime}
\label{sec:acr}

We next provide the optimal compression ratios for universal {dense}
samplers and bound the optimal ratios for universal {sparse} samplers
as $M$ and $N$ become larger.

\begin{itemize}
\item \textbf{{Dense} Samplers:} The maximum compression ratio
  $\rhors$ of universal {dense} samplers is given by
  \eqref{eq:rhonupb}.  Asymptotically in $N$, we have that $\rhors
  \rightarrow \sqrt{\frac{2\nblocks-1}{2\nblocks}N}$, which becomes
  $\rhors \rightarrow \sqrt{\frac{N}{2}}$ in the non-periodic case and
  $\rhors \rightarrow \sqrt{{N}}$ if the number of periods $\nblocks$
  also becomes large.  Alternatively, we observe that $M\rightarrow
  \sqrt{\frac{2\nblocks}{2\nblocks-1}N}$ as $N$ becomes large, which
  means that $M\rightarrow \sqrt{2N}$ in the non-periodic case and
  $M\rightarrow \sqrt{N}$ as $\nblocks\rightarrow \infty$.

\item \textbf{{Sparse} Samplers:} In
  \cite{leech1956differences,pearson1990nearoptimal} it is established
  that the quotient ${M^2}/{N}$ asymptotically converges to a constant
  $c$, which is between\footnote{As an informal guess,  consider
    the length-90 minimal sparse ruler, which has 16 elements. A
    simple approximation yields $c\approx 16^2/91 \approx 2.8132$.  }
  $\tau \approx 2.434$ and 3, with $M$ and $N-1$ respectively denoting
  the cardinality and length of a minimal linear sparse
  ruler. Therefore, the asymptotic optimal compression ratio is given by
\begin{align}
\label{eq:rhons}
\rhons \rightarrow  \sqrt{\frac{N}{c}}.
\end{align}
In terms of $M$, this means that $M\rightarrow
\sqrt{cN}$. Interestingly, if we use nested
arrays~\cite{pumphrey1993sparsearrays,pal2010nested}, the maximum
achievable compression we can obtain for suitable choices of the
parameters is $\rho_\text{NA} \rightarrow \sqrt\frac{N}{4}$, which is
therefore suboptimal. However, they present the advantage of having a
simple design. The scheme
in~\cite{wichmann1963difference,pearson1990nearoptimal} 
allows the simple construction of
sparse rulers satisfying ${M^2}/{N}<3$, which  entail compression
ratios greater than $\sqrt{\frac{N}{3}}$ even for finite $M$ and $N$.

\end{itemize}

To sum up, {dense} samplers provide better asymptotic compression
ratios than {sparse} samplers. The compression loss between both
approaches is quantified by the constant $c$, which means that between
36\% and 42\% compression may be lost for large $B$ if we use {sparse}
sampling instead of {dense} sampling.  Similar observations arise for
non-universal samplers by using the expressions in
Sec.~\ref{sec:nucs}.

Interestingly, these expressions show that the compression ratio can
be made arbitrarily large just by increasing the number of
observations. This conclusion agrees with \cite{masry1978poisson}.

\section{Discussion}
\label{sec:dis}

\change{The compression ratio was defined such that it is preserved
  for any number of realizations of $\by$ --- note that each one is compressed
  using that ratio.} In case of an arbitrarily large number of
realizations, the maximum compression ratio separates consistency from
inconsistency in the estimation. However, the notion of consistency is
not truly meaningful in case of just one realization. For those cases,
the values presented here provide simple guidelines to select suitable
compression ratios and a guess of the quality of the estimation, in
the sense that a good performance is expected when the actual
compression ratio is much lower than the maximum one and \emph{vice
  versa}.

\removenow{A number of different designs for sparse arrays in the literature do
not result in covariance samplers. This is because the interest is
focused on the cost of the system rather than on preserving all
statistical information. For example, in some cases the focus is on
the number of elements, which leads to allowing \emph{holes} in the
difference set. See, for instance, linear minimum hole
arrays~\cite{vertatschitsch1986nonredundant} and linear minimum
holes-plus-redundancies arrays~\cite{degraaf1984narrowband}. An
example of alternative designs \emph{without} missing elements is
composed of linear reduced redundancy
arrays~\cite{bucker1977linearray}.
}

\section{Conclusions}
\label{sec:con}

We have derived maximum compression ratios and optimal covariance
samplers for a number of cases including Toeplitz, circulant, and
banded covariance subspaces. The results were derived for the
general periodic case, but they can be immediately particularized to
the non-periodic setting. One of the effects observed is the
convenience of having long blocks.


Two common schemes were considered: {sparse} and {dense} samplers. The
design of optimal {sparse} samplers is related to the minimal sparse
ruler problem, which is an exhaustive search problem with known
near-optimal simple approximations.  Some cases deal
with \emph{linear} and others with \emph{circular} sparse rulers.

For {dense} samplers, the proposed random design is much simpler since
it solely depends on the size of the compression matrix relative to
the dimension of the covariance subspace. As opposed to the designs
presented for sparse samplers, which result in samplers which are
optimal \emph{only} among the family of sparse samplers, the random
designs proposed here result in samplers which are optimal in general,
that is, no other covariance sampler (either dense or sparse) can do
better. 

\appendices 

\section{Proof of  \thref{prop:aset}}
\label{sec:paset}
Clearly, if $\phi$ is invertible so is $\phir$. In order to prove
the converse statement, it suffices to show that $\phi$ is injective if
$\phir$ is injective. This is a simple consequence of the
definition of the codomains for both functions. Therefore, we need to
prove that, given any two vectors $\bm a =[a_0,\cdots, a_{\nbasis-1}]^T$ and $\bm
b= [b_0,\cdots, b_{\nbasis-1}]^T$ in $\rfield^\nbasis$, the matrices 
\begin{align}
\bsig_{\bm a} = \sum_\basisind a_\basisind \bsig_\basisind~~~\text{and}~~~
\bsig_{\bm b} = \sum_\basisind b_\basisind \bsig_\basisind
\end{align}
must satisfy that 
\begin{align}
\phi(\bsig_{\bm a}) = \phi(\bsig_{\bm b}) ~~\Rightarrow~~
\bsig_{\bm a} = \bsig_{\bm b}
\end{align}
or, equivalently, that
\begin{align}
\label{eq:phirtp}
\phi(\bsig_{\bm a}) = \phi(\bsig_{\bm b}) ~~\Rightarrow~~
\bm a = \bm b,
\end{align}
since $\basis$ is linearly independent. To do
so, let us take $\nbasis$ linearly independent vectors $\bm
\alpha_0,\cdots, \bm \alpha_{\nbasis-1}$, where $\bm \alpha_i =
      [\alpha_{i,0}\ldots\alpha_{i,\nbasis-1}]^T$, such that the $\nbasis$
      matrices
\begin{align}
\bsig_{\bm \alpha_i} = \sum_\basisind \alpha_{i,\basisind}\bsig_\basisind,~~~~i=0,\ldots,\nbasis-1
\end{align}
are in $\aset$. This operation is possible since
$\dimr[\aset \cap \spanvr \basis]=\nbasis$. Moreover, since $\phir$ is
injective and $\{\bsig_{\bm \alpha_i}\}_{i=0}^{\nbasis-1}$ is a linearly independent
set of matrices, it follows that the matrices
\begin{align}
\bbsig_{\bm \alpha_i}=\phir(\bsig_{\bm \alpha_i})=
\phi(\bsig_{\bm \alpha_i})
 = \sum_\basisind \alpha_{i,\basisind}\bbsig_\basisind
\end{align}
also form an independent set of matrices. On the other hand, since the
$\nbasis$ vectors $\bm \alpha_i$ constitute a basis for $\rfield^\nbasis$, it is
possible to write  $\bm a $ and $\bm b$ as:
\begin{align}
\bm a = \sum_i \tilde a_i \bm \alpha_i~~~~\text{and}~~~~
 \bm b = \sum_i \tilde b_i \bm \alpha_i, 
\end{align}
for some $\tilde a_i,\tilde b_i\in \rfield$, which in turn means that
\begin{align}
\bsig_{\bm a} = \sum_i \tilde a_i \bsig_{\bm
  \alpha_i}~~~~\text{and}~~~~
\bsig_{\bm b} = \sum_i \tilde b_i \bsig_{\bm \alpha_i}
\end{align}
or
\begin{align}
\phi(\bsig_{\bm a}) = \sum_i \tilde a_i \bbsig_{\bm
  \alpha_i}~~~~\text{and}~~~~
\phi(\bsig_{\bm b}) = \sum_i \tilde b_i \bbsig_{\bm \alpha_i}. 
\end{align}
Noting that the matrices $ \bbsig_{\bm \alpha_i}$ are linearly
independent leads to the statement
\begin{align}
\phi(\bsig_{\bm a}) = 
\phi(\bsig_{\bm b}) ~~~~\Rightarrow~~~~ \tilde a_i = \tilde b_i
~\forall i,
\end{align}
which is equivalent to \eqref{eq:phirtp}, thus concluding the proof.

\section{ Proof of \thref{prop:2}}
\label{sec:pp2}
In order to show \thref{prop:2} we will proceed by computing the
dimension of $\ker\phic$, and deriving the conditions under which
$\dim\ker\phic=0$, which, in virtue of \thref{prop:ker}, are the
conditions determining whether $\bbphi$ defines a covariance
sampler. However, since the direct computation of $\ker \phic$ is not
a simple task, we perform several intermediate steps. First, we
compute $\ker \tphis$, where $\tphis$ is defined as the extension of
$\phic$ to $\cfield^{\ulen\times \ulen}$:
   \begin{align}
\label{eq:tphisdef}
     \begin{array}{ccccc}
\cfield^{N\nblocks\times N\nblocks}   &\xrightarrow{~~\tphis~~} & \cfield^{M\nblocks\times M\nblocks} \\
  \bsig &\xrightarrow{~~~~~~} & \bbsig = \bbphi \bsig \bbphi^H
     \end{array}
   \end{align}
We later compute $\dim \ker \phic$ by successive intersections as
\begin{align}
\label{eq:kerphici}
\ker \phic = \spanvc \basis \cap \left(\tspace \cap \left(\bspace\cap \ker \tphis\right)\right),
\end{align}
where $\tspace$ represents the set of (not necessarily Hermitian) $N\nblocks
\times N\nblocks$ Toeplitz matrices and $\bspace$ represents the set of $N\nblocks
\times N\nblocks$  matrices with Toeplitz $N\times N$ blocks. The
matrices in $\bspace$ can thus be written as
\begin{align}
 \left[
\begin{array}{c c c c}
\bm A_{0,0}&,\cdots,&\bm A_{0,\nblocks-1}\\
\vdots&&\vdots\\
\bm A_{\nblocks-1,0}&,\cdots,&\bm A_{\nblocks-1,\nblocks-1}\\
\end{array}
\right]
\end{align}
where the blocks $\bm A_{\blockind,p}\in \cfield^{N\times N}$ are
Toeplitz. Expression \eqref{eq:kerphici} results from the fact
that $\ker \phic = \spanvc \basis \cap  \ker \tphis$ and
\begin{align}
\spanvc \basis \subset  \tspace \subset \bspace.
\end{align}

On the other hand, the requirement that the probability measure is
absolutely continuous
means that the probability that any row (or column) of $\bphi$ is in a
given subspace of dimension less than $N$ (resp. $M$) is zero. Another
consequence is that $\rank \bphi = M\leq N$ with probability one and,
as a result, the (right) null space of $\bphi$ has dimension
$N-M$. Let us denote by $\bV$ an $N\times( N-M)$ matrix whose columns
span this null space. Due to the properties of $\bphi$, it is clear
that the probability that the columns of $\bV$ are contained in a
given subspace of dimension less than $N$ is zero.

We start by computing a basis for  $\ker \tphis$ in terms of $\bV$.

\begin{mylemma}
\thlabel{prop:null} \emph{ Let $\eij\in \cfield^{\nblocks\times \nblocks}$ be the
  matrix with all entries set to zero but the $(i,j)$-th entry, which
  is one, and let $\bm e_k$ denote the $k$-th column of the identity
  matrix $\bm I_N$.  Let also $\tphis$ be defined as in
  \eqref{eq:tphisdef}, and let the columns of $\bm V =[\bv_0,\cdots,
    \bv_{N-M-1}]\in \cfield^{N\times (N-M)}$ form a basis for the null
  space of $\bphi$. Then, a basis for $\ker \tphis$ is given by}
\begin{align}
\wset=& \bigcup_{i=0}^{\nblocks-1}\bigcup_{j=0}^{\nblocks-1}\wset_{i,j},
\end{align}
\emph{where}
\begin{align}
\wset_{i,j}=& \Big\{
\eij\otimes \bm e_k \otimes \bm v_l^H,\\\nonumber
&~~~~ k=0,1,\ldots,N-1,~l=0,1,\ldots, N-M-1 \Big\}\\
&\cup
\Big\{
\eij\otimes \bm e_k^H \otimes \bm v_l,\\\nonumber
&~~~~ k=0,1,\ldots,M-1,~l=0,1,\ldots, N-M-1 \Big\}.
\end{align}
\end{mylemma}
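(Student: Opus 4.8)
The plan is to exploit the Kronecker structure $\bbphi=\bm I_\nblocks\otimes\bphi$ to decouple the action of $\tphis$ block by block, and then to verify the three standard conditions for a basis: every element of $\wset$ lies in $\ker\tphis$, the elements of $\wset$ are linearly independent, and $|\wset|=\dim\ker\tphis$.

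First I would write an arbitrary $\bsig\in\cfield^{N\nblocks\times N\nblocks}$ in terms of its $N\times N$ blocks as $\bsig=\sum_{i,j}\eij\otimes\bm A_{i,j}$ and observe that $\bbphi\bsig\bbphi^H=\sum_{i,j}\eij\otimes(\bphi\bm A_{i,j}\bphi^H)$. Since the $\eij$ occupy disjoint block positions, this vanishes if and only if $\bphi\bm A_{i,j}\bphi^H=\bm 0$ for every $(i,j)$, so that $\ker\tphis=\bigoplus_{i,j}\{\eij\otimes\bm A:\bm A\in\ker\psi\}$, where $\psi(\bm A)=\bphi\bm A\bphi^H$ acts on $\cfield^{N\times N}$. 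The problem thus reduces to showing that, within one block, the set $\mathcal{B}$ of rank-one matrices $\bm e_k\bm v_l^H$ and $\bm v_l\bm e_k^H$ (the outer products to which the Kronecker expressions $\bm e_k\otimes\bm v_l^H$ and $\bm e_k^H\otimes\bm v_l$ reduce, with the stated index ranges) is a basis of $\ker\psi$; the full claim then follows because the $\eij$ are independent. Membership is immediate: using $\bphi\bm v_l=\bm 0$ one gets $\bphi(\bm e_k\bm v_l^H)\bphi^H=(\bphi\bm e_k)(\bphi\bm v_l)^H=\bm 0$ and $\bphi(\bm v_l\bm e_k^H)\bphi^H=(\bphi\bm v_l)\bm e_k^H\bphi^H=\bm 0$.

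For the cardinality I would note that $\rank\bphi=M$ holds with probability one, so $\psi$ is surjective onto $\cfield^{M\times M}$ and hence $\dim\ker\psi=N^2-M^2$. This matches $|\mathcal{B}|=N(N-M)+M(N-M)=N^2-M^2$, and summing over the $\nblocks^2$ blocks gives $|\wset|=\nblocks^2(N^2-M^2)=\dim\ker\tphis$. With membership and the correct count in hand, it only remains to establish linear independence.

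Linear independence is the crux and the one place where the continuity of $\mu$ is genuinely used. Each family is internally independent because $\{\bm e_k\}$ and the columns $\{\bm v_l\}$ of $\bm V$ are independent, so the real task is to rule out a nontrivial cancellation between the two families. Every matrix in the first family satisfies $\bm A\bphi^H=\bm 0$ (since $\bm v_l^H\bphi^H=(\bphi\bm v_l)^H=\bm 0$), so given a vanishing combination $\bm A_1+\bm A_2=\bm 0$ with $\bm A_1$ from the first family and $\bm A_2=\sum_{k<M}\bm w_k\bm e_k^H$, $\bm w_k\in\ker\bphi$, from the second, right-multiplication by $\bphi^H$ annihilates $\bm A_1$ and leaves $\sum_{k<M}\bm w_k(\bphi\bm e_k)^H=\bm 0$, i.e. $\bm W\bm\Phi_M^H=\bm 0$ with $\bm W=[\bm w_0,\ldots,\bm w_{M-1}]$ and $\bm\Phi_M$ the $M\times M$ matrix formed by the first $M$ columns of $\bphi$. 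The hard part is precisely guaranteeing that $\bm\Phi_M$ is invertible: since $\mu$ is absolutely continuous with respect to Lebesgue measure and the set $\{\det\bm\Phi_M=0\}$ has zero Lebesgue measure, invertibility holds with probability one, which forces $\bm W=\bm 0$ and hence $\bm A_2=\bm A_1=\bm 0$, so all coefficients vanish. Beyond this general-position step, the remainder is routine Kronecker-product bookkeeping.
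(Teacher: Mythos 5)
Your proof is correct, and it takes a genuinely different and more self-contained route than the paper's. The paper works in vectorized form: it passes to the null space of the $(M\nblocks)^2\times(N\nblocks)^2$ matrix $\bbphi^\ast\otimes\bbphi$, asserts that $\ker\tphis$ consists of matrices of the form $\bbv\bm A^H+\bm B\bbv^H$, builds an overcomplete spanning matrix $\bbw=[\bm I_{N\nblocks}\otimes\bbv,\ \bbv^\ast\otimes\bm I_{N\nblocks}]$, and then proves a separate column-reduction lemma (written out only for $\nblocks=1$) to prune it down to $(N^2-M^2)\nblocks^2$ independent columns before de-vectorizing. You instead use the identity $(\bm I_\nblocks\otimes\bphi)(\eij\otimes\bm A)(\bm I_\nblocks\otimes\bphi)^H=\eij\otimes\bphi\bm A\bphi^H$ to decouple the kernel into $\nblocks^2$ identical single-block problems, and then settle the single-block problem by the standard triple of membership, dimension count ($\dim\ker\psi=N^2-M^2$ from surjectivity of $\psi$ when $\rank\bphi=M$), and linear independence. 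What your approach buys is that it avoids both the unproved characterization $\bsig=\bbv\bm A^H+\bm B\bbv^H$ and the column-elimination bookkeeping, and it isolates precisely where genericity enters: the lemma as literally stated can fail for special $\bphi$ (e.g.\ $\bphi=[\bm 0,\ \bm I_M]$ makes the two families overlap), and your reduction shows the only obstruction is the invertibility of the $M\times M$ submatrix $\bm\Phi_M$, which holds with probability one under the absolute-continuity hypothesis. The paper handles the same issue more opaquely, via a footnote requiring the first row of $\bV$ to have no zero entries. The one cosmetic remark is that your choice of the \emph{first} $M$ columns of $\bphi$ is arbitrary; any $M$ columns indexed to match the second family's $\bm e_k^H$ factors would do, and it is that fixed choice of column indices $k=0,\ldots,M-1$ in the statement of $\wset_{i,j}$ that forces the corresponding minor to be the relevant one.
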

\begin{IEEEproof}
See Appendix~\ref{sec:pprop:null}.
\end{IEEEproof}

Now let us evaluate the intersection $\bspace\cap \ker \tphis$, which
means that we must look for the matrices in $\ker \tphis$ whose
$N\times N$ blocks have a Toeplitz structure.  For the sake of
simplicity, let us proceed block-by-block by separately considering
the subspaces generated by each $\wset_{i,j}$. Clearly, the matrices
in $\spanvc\wset_{i,j}$ can have, at most, a single non-null
$N\times N$ block, which is the $(i,j)$-th block. This block is in the
subspace generated by the following basis:
\begin{align}
&  \{\nonumber
  \bm e_k \otimes \bm v_l^H,~
  k=0,1,\ldots,N-1,\\&~~~~~~~~~~~~~~~~~~~l=0,1,\ldots, N-M-1  \}
\nonumber\\\nonumber
&\cup
 \{
 \bm e_k^H \otimes \bm v_l,~ k=0,1,\ldots,M-1,\\&~~~~~~~~~~~~~~~~~~~~l=0,1,\ldots, N-M-1  \}.\nonumber
\end{align}
Therefore, all blocks in this subspace can be written in terms of this
basis as
\begin{align}
\sum_k \sum_l \alpha_{k,l}  (   \bm e_k \otimes \bm v_l^H )+
\sum_k \sum_l \beta_{k,l} (\bm e_k^H \otimes \bm v_l)
\end{align}
for some $\alpha_{k,l}\in \cfield$ and $\beta_{k,l}\in \cfield$. The
blocks with Toeplitz structure must necessarily satisfy
\begin{align}
\label{eq:condbt}
&\sum_{n=-N+1}^{N-1} \gamma_n \bm P_n = \sum_{k=0}^{N-1} \sum_{l=0}^{N-M-1} \alpha_{k,l}  (   \bm e_k \otimes \bm v_l^H ) \\&
~~~~~~~~~~~~~~~~~~+
\sum_{k=0}^{M-1} \sum_{l=0}^{N-M-1} \beta_{k,l} (\bm e_k^H \otimes \bm v_l)
\nonumber
\end{align}
for some  $\gamma_n\in \cfield$, where $\bm P_n$ equals
$\sidblen^n$ for  $n\geq 0$ and $(\sidblen^{-n})^T$ for
$n<0$, with $\sidblen$  defined in Sec.~\ref{sec:rcs}. 

Expression \eqref{eq:condbt} represents a system of linear equations
in $\alpha_{k,l}$, $\beta_{k,l}$ and $\gamma_{n}$, with $N^2
-M^2+2N-1$ unknowns and $N^2$ equations. On the other hand, since
$\bphi$, and consequently $\bV$, follow a continuous
distribution, it follows that there are $\min(N^2, N^2 -M^2+2N-1)$
independent matrices in \eqref{eq:condbt}. Consequently, if $N^2\geq N^2
-M^2+2N-1$ the only solution is just the zero matrix, and $\bspace \cap
\ker \tphis=\{\bm 0\}$, which in turn means that $\ker \phic=\{\bm
0\}$. Therefore, a sufficient condition for
$\bbphi$ to define a covariance sampler (see \thref{prop:ker}) is
\begin{align}
\label{eq:m2c1}
M^2 \geq 2N-1.
\end{align}
Conversely, if $N^2<N^2 -M^2+2N-1$ the
subspace of solutions has dimension $N^2 -M^2+2N-1-N^2=2N-M^2-1$.
Therefore, the  blocks of the  matrices in $\bspace \cap \ker \tphis$ can be
written as a linear combination of $ 2N-M^2-1$ Toeplitz matrices $\bm
M_k$.  By considering all blocks, it follows that $\bspace \cap \ker
\tphis$ is generated by the following basis:
\begin{align}
\nonumber
& \Big\{
\eij\otimes \bm M_k,~~i,j=0,1,\ldots,\nblocks-1;\\&~~~~~~~~~~~~~~~~~~~~~~k=0,1,\ldots,2N-M^2-2 \Big\}.
\end{align}
Thus, any matrix in $\bspace \cap \ker \tphis$ can be written as
\begin{align}
\bsig = \sum_{i,j,k}\eta_{i,j}^k \eij\otimes \bm M_k.
\end{align}

Now we compute the dimension of $\tspace \cap \left(\bspace\cap \ker
\tphis\right)$. First note that $\dim (\bspace\cap \ker \tphis) =
\nblocks^2(2N-M^2-1)$. In order for $\bsig\in \bspace\cap \ker \tphis $ to be
Toeplitz, we require that $\eta_{i,j}^k$ only depend on the
difference $i-j$, which reduces the dimension of this space to
$2N-M^2-1$ times the number of block diagonals, i.e.,
$(2N-M^2-1)(2\nblocks-1)$. Moreover, since any two adjacent block diagonals
share $N-1$ diagonals, this imposes $(2\nblocks-2)(N-1)$ additional equations
and results in
\begin{align}
&\dim \left(\tspace \cap \left(\bspace\cap \ker 
\tphis\right)\right) \\&~~~~~~=
(2N-M^2-1)(2\nblocks-1)-(2\nblocks-2)(N-1).  \nonumber
\end{align}

At this point, note that $\tspace$ is the smallest subspace containing
of both $\tspace \cap \bspace\cap \ker \tphis$ and $\spanvr
\basis$. Since $\bphi$ was generated according to a continuous
distribution, then with probability one these two subspaces will not
overlap (except for the zero matrix) unless the sum of their
dimensions exceeds the dimension of the parent subspace, which is
$2N\nblocks-1$. Therefore, $\bbphi$ defines a covariance sampler if
and only if
\begin{align*}
(2N-M^2-1)(2\nblocks-1)-(2\nblocks-2)(N-1) + \nbasis  \leq 2N\nblocks-1
\end{align*}
or, equivalently
\begin{align}
\label{eq:m2c2}
\nbasis  \leq M^2(2\nblocks-1).
\end{align}

It remains only to show that one only needs to look at 
\eqref{eq:m2c2} in order to assess whether a matrix $\bbphi$ defines a
covariance sampler, the condition in \eqref{eq:m2c1} being completely
irrelevant. This follows from the fact that \eqref{eq:m2c1} implies
\eqref{eq:m2c2}. Indeed, if we multiply both sides of \eqref{eq:m2c1}
by $(2\nblocks-1)$, we obtain
\begin{align}
M^2(2\nblocks-1)&\geq (2N-1)(2\nblocks-1)\\
&= (2N\nblocks-1)+2(N-1)(\nblocks-1) \\
&\geq (2N\nblocks-1) \geq \nbasis 
\end{align}
where the second inequality follows from the fact that $(N-1)(\nblocks-1)\geq
0$ and the third one is a consequence of the linear independence of
$\basis$.  Therefore, \eqref{eq:m2c1} implies \eqref{eq:m2c2}, and
$\bbphi$ defines a covariance sampler if and only if \eqref{eq:m2c2}
holds.

\section{Proof of \thref{prop:null}}
\label{sec:pprop:null}

Computing $\ker\tphis$ amounts to finding a basis for the subspace of
matrices $\bsig$ in $\cfield^{N\nblocks\times N\nblocks}$ satisfying
$\bbphi \bsig \bbphi^H=\bm 0$.  Vectorizing this expression (see
e.g.~\cite{bernstein2009}) results in the condition $(\bbphi^\ast
\otimes \bbphi)~ \bm z = \bm 0$, where $\bm z = \vecv \bsig$. Thus,
$\ker\tphis$ is given (up to inverse vectorization) by the null space
of the $(M\nblocks)^2\times (N\nblocks)^2$ matrix $\bbphi^\ast \otimes
\bbphi$.

Since the columns of $\bV$ constitute a basis for the null space of
$\bphi$ and since $\bbphi=\bm I_{\nblocks} \otimes \bphi$, the columns of
$\bbv = \bm I_{\nblocks} \otimes \bV$ constitute a basis for the null-space
of $\bbphi$. It can be shown that $\ker\tphis$ is composed of matrices
of the form $\bsig = \bbv \bm A^H + \bm B \bbv^H$, where $\bm A$ and
$\bm B$ are arbitrary matrices of the appropriate dimensions. It
follows that the null space of $\bbphi^\ast \otimes \bbphi$ is spanned
by the columns of the matrix
\begin{align}
\label{eq:bbwdef}
\bbw = [\bm I_{N\nblocks}\otimes \bbv,~~\bbv^\ast \otimes \bm I_{N\nblocks}].
\end{align}

By the properties of the Kronecker product~\cite{bernstein2009}, the
fact that $\bphi$ has maximum rank implies that $\bbphi^\ast \otimes
\bbphi$ has maximum rank as well, so that its null space has dimension
$(N^2-M^2)\nblocks^2$.  However, since $\bbv$ is $N\nblocks\times (N-M)\nblocks$, it is
clear that $\bbw$ has $2(N-M)N\nblocks^2$ columns, which is greater than
$(N^2-M^2)\nblocks^2$. Thus, in order to obtain a basis for the null space of
$\bbphi^\ast \otimes \bbphi$ we should remove dependent columns from
$\bbw$. This procedure is carried out in the following lemma:
\begin{mylemma}
\emph{ Let $\bV \in \cfield^{N\times(N-M)}$, with $M\leq N$, be a
  matrix whose columns generate the null space of $\bphi\in
  \cfield^{M\times N}$, which follows a continuous distribution, and
  let $\bbv = \bm I_{\nblocks} \otimes \bV$. Then, the columns of $\bbw$,
  defined by \eqref{eq:bbwdef}, span the same subspace as the columns
  of $\dbbw$, which is defined as }
\begin{align}
\label{eq:dbbwdef}
\dbbw = [\bm I_{N\nblocks}\otimes \bbv,~~\bbv^\ast \otimes \bm I_{\nblocks}\otimes\idzbsamps],
\end{align}
\emph{where $\idzbsamps = [\bm I_M, ~\bm
  0_{M,N-M}]^T$.}
\end{mylemma}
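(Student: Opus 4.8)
The plan is to prove that the two matrices have the same column space, $\range{\dbbw}=\range{\bbw}$, by pairing an elementary inclusion with a dimension count. I would first dispose of the inclusion $\range{\dbbw}\subseteq\range{\bbw}$, which is immediate. The first blocks of $\bbw$ and $\dbbw$ coincide, so only the second blocks need comparison. Writing $\bm I_{N\nblocks}=\bm I_{\nblocks}\otimes\bm I_N$, the second block of $\bbw$ becomes $\bbv^\ast\otimes\bm I_{\nblocks}\otimes\bm I_N$, and since $\idzbsamps=[\bm I_M,\bm 0_{M,N-M}]^T$ is exactly the first $M$ columns of $\bm I_N$, the second block of $\dbbw$, namely $\bbv^\ast\otimes\bm I_{\nblocks}\otimes\idzbsamps$, is a column submatrix of it. Hence every column of $\dbbw$ is already a column of $\bbw$.

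For the reverse inclusion I would argue by dimensions. Counting columns, $\bm I_{N\nblocks}\otimes\bbv$ contributes $N(N-M)\nblocks^2$ and $\bbv^\ast\otimes\bm I_{\nblocks}\otimes\idzbsamps$ contributes $M(N-M)\nblocks^2$, so $\dbbw$ has exactly $(N^2-M^2)\nblocks^2$ columns. Since $\range{\bbw}$ equals the null space of $\bbphi^\ast\otimes\bbphi$, whose dimension was already shown to be $(N^2-M^2)\nblocks^2$, it suffices to prove that the columns of $\dbbw$ are linearly independent: this, combined with the inclusion above, forces $\range{\dbbw}=\range{\bbw}$.

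To establish full column rank I would translate the statement into matrix form. Using $\vecv(\bm X\bm Y\bm Z)=(\bm Z^T\otimes\bm X)\vecv\bm Y$, the columns of $\dbbw$ are the vectorizations of the matrices $\bbv\bm A^H+(\bm I_{\nblocks}\otimes\idzbsamps)\bm C\bbv^H$ as $\bm A$ and $\bm C$ range freely over $\cfield^{N\nblocks\times(N-M)\nblocks}$ and $\cfield^{M\nblocks\times(N-M)\nblocks}$. Linear independence is therefore equivalent to the implication that $\bbv\bm A^H+(\bm I_{\nblocks}\otimes\idzbsamps)\bm C\bbv^H=\bm 0$ forces $\bm A=\bm 0$ and $\bm C=\bm 0$. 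Because both $\bbv=\bm I_{\nblocks}\otimes\bV$ and $\bm I_{\nblocks}\otimes\idzbsamps$ are block diagonal, this equation decouples: writing $\bm A^H$ and $\bm C$ in $\nblocks\times\nblocks$ block form with blocks $\bm A^H_{ij}\in\cfield^{(N-M)\times N}$ and $\bm C_{ij}\in\cfield^{M\times(N-M)}$, its $(i,j)$ block reads $\bV\bm A^H_{ij}+\idzbsamps\bm C_{ij}\bV^H=\bm 0$, one independent equation per pair $(i,j)$.

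The problem thus reduces to a single block. Partitioning $\bV=[\bV_1^T,\bV_2^T]^T$ with $\bV_1\in\cfield^{M\times(N-M)}$ and $\bV_2\in\cfield^{(N-M)\times(N-M)}$, the term $\idzbsamps\bm C_{ij}\bV^H$ has vanishing bottom $N-M$ rows, so the bottom rows of the block equation yield $\bV_2\bm A^H_{ij}=\bm 0$; if $\bV_2$ is invertible this gives $\bm A^H_{ij}=\bm 0$, and the top rows then reduce to $\bm C_{ij}\bV^H=\bm 0$, forcing $\bm C_{ij}=\bm 0$ because $\bV$ (hence $\bV^H$) has full rank $N-M$. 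The one step that genuinely uses the hypothesis, and the main obstacle, is to certify that $\bV_2$ is invertible with probability one. I would argue that $\bV_2$ is singular precisely when the null space of $\bphi$ meets the fixed coordinate subspace $\spanvc\{\bm e_0,\ldots,\bm e_{M-1}\}$ nontrivially; for a generic $(N-M)$-dimensional null space this intersection is trivial, the exceptional set being of zero Lebesgue measure, exactly the kind of genericity already invoked for $\bphi$ and $\bV$ above. Assembling the block conclusions gives $\bm A=\bm 0$, $\bm C=\bm 0$, completing the argument.
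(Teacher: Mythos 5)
Your proof is correct, but it follows a genuinely different route from the paper's. The paper proceeds by explicit column elimination: it rescales the second block of $\bbw$ using the entries of $\bV^\ast$, groups the columns into submatrices containing $[\bV,\bm I_N]$-type blocks, and shows that the last $N-M$ columns of each such block are linear combinations of the first $N$, so they can be deleted; notably, it only carries this out for $\nblocks=1$ and asserts that the general case is analogous modulo ``overloaded notation.'' You instead pair the trivial containment $\range{\dbbw}\subseteq\range{\bbw}$ (second block of $\dbbw$ is a column submatrix of that of $\bbw$) with a column count, and reduce everything to showing $\dbbw$ has full column rank; devectorizing to $\bV\bm A^H_{ij}+\idzbsamps\bm C_{ij}\bV^H=\bm 0$ and exploiting the block-diagonal structure of $\bbv$ and $\bm I_\nblocks\otimes\idzbsamps$ handles arbitrary $\nblocks$ with no extra notation, which is a real advantage over the paper's presentation. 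Both arguments ultimately rest on the same generic-position fact: your invertibility of $\bV_2$ is exactly the statement that $\nullspace{\bphi}\cap\spanvc\{\bm e_0,\ldots,\bm e_{M-1}\}=\{\bm 0\}$, which is also what the paper needs when it writes the last $N-M$ columns of $[\bV,\bm I_N]$ in terms of the first $N$. To make that step fully airtight you could note that this event is precisely the nonvanishing of the determinant of the leading $M\times M$ submatrix of $\bphi$, a nonzero polynomial in the entries of $\bphi$, whose zero set has Lebesgue measure zero and hence $\mu$-measure zero by absolute continuity.
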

\begin{IEEEproof}
The procedure we follow in this proof is to remove linearly dependent
columns from $\bbw$.  Since the case $\nblocks>1$ is quite tedious, here we
only show this result for the case $\nblocks=1$. The proof for the general
case follows the same lines and it is easily extrapolated, but it
requires an overloaded notation.  For $\nblocks=1$ we have that
\begin{align}
\bbw = [\bm I_{N}\otimes \bV,~~\bV^\ast \otimes \bm I_{N}].
\end{align}
Now scale the last $N(N-M)$  columns of $\bbw$ to
obtain
\begin{align}
\bbw' = [\bm I_{N}\otimes \bV,~~\bm G \otimes \bm I_{N}],
\end{align}
where $\bm G$ is the result of scaling the columns of $\bm V^\ast$
such that the first row contains only ones\footnote{This is always
  possible whenever the elements of the first row of $\bV$ are all
  different from zero. However, it is possible with probability one to
  choose  $\bV$ such that it generates the null space of
  $\bphi$ and satisfies this condition.}:
\begin{align}
\bm G =
\left[
\begin{array}{c c c c}
1& 1 & \ldots & 1\\
g_{1,0}& g_{1,1} & \ldots & g_{1,N-M-1}\\
\vdots & \vdots &\ddots & \vdots \\
g_{N-1,0}& g_{N-1,1} & \ldots & g_{N-1,N-M-1}\\
\end{array}
\right]
\end{align}
Now consider a submatrix of $\bbw'$ obtained by retaining the first
$N(N-M)$ columns and the columns with indices $N(N-M) +
Ni,\ldots,N(N-M) + N(i+1) -1$, i.e., 
\begin{align}
\bbw_i' = 
\left[
\begin{array}{c c c c c}
  \bV & \bm 0 & \ldots & \bm 0 & \bm I_{N}\\
\bm 0 & \bV &  \ldots & \bm 0& g_{1,i}\bm I_{N}\\
 &  & \ddots & \\
\bm 0 & \bm 0 &  \ldots & \bV & g_{N-1,i}\bm I_{N}\\
\end{array}
\right], 
\end{align}
where $i=0,\ldots,N-M-1$.  Scaling the diagonal blocks on the left yields:
\begin{align}
\label{eq:bbwi}
\bbw_i'' = 
\left[
\begin{array}{c c c c c}
  \bV & \bm 0 & \ldots & \bm 0 & \bm I_{N}\\
\bm 0 & g_{1,i}\bV &  \ldots & \bm 0&  g_{1,i}\bm I_{N}\\
 &  & \ddots & \\
\bm 0 & \bm 0 &  \ldots & g_{N-1,i}\bV & g_{N-1,i}\bm I_{N}\\
\end{array}
\right].
\end{align}
Now, since $\bphi$ follows a continuous distribution, the
last $N-M$ columns of  $[\bV,\bm I_N]$ can be written as
linear combinations of the first $N$ columns, which means that the
last $N-M$ columns of $\bbw_i'$ are linearly dependent of the
others. Repeating this operation for $i = 0,\ldots, N-M-1$ and
removing from $\bbw$ the columns declared as dependent at each $i$
gives
\begin{align}
\dbbw = [\bm I_{N}\otimes \bV,~~\bV^\ast \otimes \idzbsamps],
\end{align}
which clearly spans the same subspace as $\bbw$. In the general case with
$\nblocks\geq 1$ we obtain
\begin{align}
\dbbw = [\bm I_{N\nblocks}\otimes \bbv,~~\bbv^\ast \otimes \bm I_{\nblocks}\otimes\idzbsamps].
\end{align}
\end{IEEEproof}

Note that, indeed, the matrix defined in \eqref{eq:dbbwdef} has
$(N^2-M^2)\nblocks^2$ columns, which means that they constitute a basis for
the null space of $\bbphi^\ast \otimes \bbphi$. Upon inverse
vectorization of the columns of $\dbbw$ we obtain the sought basis in
matrix form:
\begin{align}
\wset=& \Big\{
\eij\otimes \bm e_k \otimes \bm v_l^H,~~ i,j=0,1,\ldots
\nblocks-1,\\&~~~~~~~~~k=0,1,\ldots,N-1,~l=0,1,\ldots, N-M-1 \Big\}\nonumber
\\
&\cup
\Big\{
\eij\otimes \bm e_k^H \otimes \bm v_l,
~~ i,j=0,1,\ldots
\nblocks-1,\\&~~~~~~~~~k=0,1,\ldots,M-1,~l=0,1,\ldots, N-M-1 \Big\}.\nonumber
\end{align}

\section{Proof of \thref{prop:psr}}
\label{sec:ppsr}

Clearly, if $\bspat$ is a length-$(N-1)$ sparse ruler, then
\eqref{eq:isetp} defines a periodic sparse ruler. To show the converse
statement, assume that $\spat$ is a periodic sparse ruler and take
$\bspat = \spat \cap \{0,\ldots, N-1\}$. Then,
$\{0,\ldots,N\nblocks-1\}\subset \Delta(\spat)$ and, in particular,
$\{N(\nblocks-1),\ldots,N\nblocks-1\}\subset \Delta(\spat)$, meaning that
\begin{align}
&\forall \dif  \in \{N(\nblocks-1),\ldots,N\nblocks-1\},~\exists
  \aelt,\aelo\in \spat~\text{s.t.}~\aelt-\aelo = \dif.
\nonumber
\end{align}
Due to the periodicity of $\spat$, any $\spatel\in \spat$ can be
uniquely decomposed as $ \spatel = \bspatel_\spatel + \blockind_\spatel N$, where $\bspatel_\spatel\in \bspat$ and
$\blockind_\spatel \in \{0,\ldots,\nblocks-1\}$. Denote as $\bspatel_\aelo,~\bspatel_\aelt,~\blockind_\aelo$ and $\blockind_\aelt$ the
corresponding coefficients of the decomposition of $\aelo$ and $\aelt$.
Therefore, the condition above becomes
\begin{align}
&\forall \dif \in \{N(\nblocks-1),\ldots,N\nblocks-1\},~
\exists \bspatel_\aelo,\bspatel_\aelt\in \bspat\\&~~\text{and}~~\blockind_\aelo,\blockind_\aelt\in   \{0,\ldots,\nblocks-1\}~\st~\bspatel_\aelt-\bspatel_\aelo+(\blockind_\aelt-\blockind_\aelo)N = \dif.\nonumber
\end{align}
Since $\bspatel_\aelt-\bspatel_\aelo\leq N-1$ and $\dif\geq N(\nblocks-1)$, it is clear that $\blockind_\aelt-\blockind_\aelo$
must equal $\nblocks-1$  in order for the condition $\bspatel_\aelt-\bspatel_\aelo+(\blockind_\aelt-\blockind_\aelo)N
= \dif$ to hold. Then, after subtracting $N(\nblocks-1)$, the following
equivalent expression arises:
\begin{align}
&\forall \dif \in \{0,\ldots,N-1\},~
\exists \bspatel_\aelo,\bspatel_\aelt\in
\bspat~\st~\bspatel_\aelt-\bspatel_\aelo = \dif.
\nonumber
\end{align}
Hence, $\bspat$ is a sparse ruler.

\section{Proof of \thref{prop:nuc}}
\label{sec:pnuc}

Assume that $\ulen$ is odd. The proof for $\ulen$ even follows similar
lines.  If $\Delta(\spat) = \intset{0}{\ulen-1}$, then the matrix from
\thref{prop:rmat} is given by:
\begin{align}
\bm R = 
\left[
\begin{array}{c c c}
1 & \zvl^T & \zvl^T\\
\zvl & \bm I_{\sulen}  & -\jmath\bm I_{\sulen}\\
\zvl & \bm K_{\sulen}  & \jmath\bm K_{\sulen}\\
1 & \zvl^T & \zvl^T\\
\zvl & \bm I_{\sulen}  & \jmath\bm I_{\sulen}\\
\zvl & \bm K_{\sulen}  & -\jmath\bm K_{\sulen}
\end{array}
\right]
\end{align}
where $\sulen = \frac{\ulen-1}{2}$, $\zvl$ is an $\sulen\times 1$
vector with all zeros and $\bm K_\sulen$ is an $\sulen\times\sulen$
Hankel matrix with ones in the antidiagonal and zeros elsewhere, i.e.,
its $(m,n)$-th element equals 1 if $m+n=\sulen-1$ and 0 otherwise. All
the columns are linearly independent so that $\rank\bm R=\ulen$ and,
according to \thref{prop:rmat}, $\spat$ is an $\basissc$-covariance
sampler.

Now consider removing elements from $\Delta(\spat)$.  It can readily be 
seen that the rank is not maximum iff there is some
$\delta\in\intset{0}{\ulen}$ such that $\delta\notin\Delta(\spat)$ and 
 $\ulen-\delta\notin\Delta(\spat)$. Equivalently, we can say that the
rank is maximum if and only if $\Delta_\ulen(\spat)=\intset{0}{\ulen-1}$.

\section{Proof of \thref{prop:cpsr}}
\label{sec:pcpsr}

Let us start by showing that if $\bspat$ is a circular sparse ruler,
then $\spat$ is a periodic circular sparse ruler or, in other words,
if $\Delta_N(\bspat)=\intset{0}{N-1}$, then
$\Delta_{N\nblocks}(\spat) = \intset{0}{N\nblocks-1}$. Consider any
$\dif \in\intset{0}{N-1}$. Since $\dif \in\Delta_N(\bspat)$, 
at least one of the following two conditions will hold:
\begin{align}
&\text{C1:~~}\exists \bspatel_1,\bspatel_2\in\bspat,~\bspatel_2\geq \bspatel_1 ~\text{such that}\\\nonumber
&~~~~~~~~~~~~~~~~~~~~~~~~~~~~~~~ ~(
  \bspatel_2-\bspatel_1)_N = \bspatel_2-\bspatel_1 =
  \dif \\
&\text{C2:~~}\exists \bspatel_1,\bspatel_2\in\bspat,~\bspatel_2<\bspatel_1 ~\text{such that}\\
&~~~~~~~~~~~~~~~~~~~~~~~~~~~~~~~ ~(
  \bspatel_2-\bspatel_1)_N = N+ \bspatel_2-\bspatel_1 =
  \dif \nonumber
\end{align}
We next show that, in both cases, all the elements of the form $\dif  +
\blockind N$, with $\blockind=0,\ldots,\nblocks-1$, are in $\Delta_{N\nblocks}(\spat)$:
\begin{itemize}
\item C1: consider $\spatel_2 = \bspatel_2+\blockind N$ and $\spatel_1=\bspatel_1$ for any
  $\blockind=0,\ldots,\nblocks-1$. Since $\spatel_1,\spatel_2\in \spat$, it follows that
  $(\spatel_2-\spatel_1)_{N\nblocks} = \bspatel_2+\blockind N -\bspatel_1 = \dif +\blockind N\in \Delta_{N\nblocks}(\spat)$.
 \item C2: first take $\spatel_1 = \bspatel_1$ and $\spatel_2 = \bspatel_2+N+\blockind N$ with
   $\blockind=0,\ldots,\nblocks-2$.  Since $\spatel_1,\spatel_2\in \spat$, then $(\spatel_2-\spatel_1 )_{N\nblocks}=
   \bspatel_2+N+\blockind N-\bspatel_1 = \dif  + \blockind N\in \Delta_{N\nblocks}(\spat)$. It suffices only to
   show that $\dif  + \blockind N\in \Delta_{N\nblocks}(\spat)$ when $\blockind=\nblocks-1$. To this end,
   consider $\spatel_1=\bspatel_1$ and $\spatel_2=\bspatel_2$, which results in $(\spatel_2-\spatel_1)_{N\nblocks}
   = N\nblocks+\bspatel_2-\bspatel_1 = N(\nblocks-1) + N + \bspatel_2-\bspatel_1 = N(\nblocks-1) + \dif  \in
   \Delta_{N\nblocks}(\spat)$. 
\end{itemize}
To sum up, we have shown that $\dif +\blockind N \in    \Delta_{N\nblocks}(\spat)$ for any
$\dif =0,\ldots,N-1$ and $\blockind = 0,\ldots,\nblocks-1$, which establishes that
$\spat$ is a circular sparse ruler.

To show the converse statement, assume that $\spat$ is a circular
sparse ruler, i.e., $ \Delta_{N\nblocks}(\spat) =
\intset{0}{N\nblocks-1}$. In particular, all modular distances of the
form $\dif = \intset{0}{N-1}$ are present in $
\Delta_{N\nblocks}(\spat) $, which means that one or both of the
following two conditions will be satisfied:
\begin{align}
&\text{C1':~~}\exists \spatel_1,\spatel_2\in\spat,~\spatel_2\geq \spatel_1 ~\text{such that}\\
&~~~~~~~~~~~~~~~~~~~~~~~~~~~~~~~ ~(
  \spatel_2-\spatel_1)_{N\nblocks} = \spatel_2-\spatel_1 =  \dif,
  \nonumber \\ 
&\text{C2':~~}\exists \spatel_1,\spatel_2\in\spat,~\spatel_2<\spatel_1 ~\text{such that}\\
&~~~~~~~~~~~~~~~~~~~~~~~~~~~~~~~ ~(
  \spatel_2-\spatel_1)_{N\nblocks} = N\nblocks+\spatel_2-\spatel_1 =   \dif \nonumber.
\end{align}
It is therefore to be shown that $\dif \in \Delta_N(\bspat)$ in both cases,
where $\bspat$ is defined as $\bspat = \spat \cap \intset{0}{N-1}$.
\begin{itemize}
\item C1': clearly, we can assume without any loss of
  generality that $\spatel_1\in \bspat$. According to whether $\spatel_2$ is also
  in $\bspat$ or not, we distinguish two scenarios:
\begin{itemize}
\item $\spatel_2\in \bspat$: in this case, it is clear that $(\spatel_2-\spatel_1)_N = \spatel_2-\spatel_1 \in
  \Delta_N(\bspat)$. 
\item $\spatel_2\notin \bspat$: since $0\leq \dif < N$, it follows
  that $\spatel_2$ can be written as $\spatel_2 = \bspatel + N$ for
  some $\bspatel\in \bspat$ with $\bspatel<\spatel_1$. Therefore,
  $(\bspatel-\spatel_1)_N = N+\bspatel-\spatel_1 = \spatel_2-\spatel_1
  = \dif \in \Delta_N(\bspat)$.
\end{itemize}
 \item C2': since $0\leq \dif < N$, it can be seen that
   $N(\nblocks-1)<\spatel_1-\spatel_2\leq N\nblocks$, which in turn
   requires $\spatel_2\in \bspat$ and $\spatel_1 = \bspatel +
   N(\nblocks-1)$ for some $\bspatel\in \bspat$ with
   $\bspatel>\spatel_2$. Now consider the circular distance between
   $\bspatel$ and $\spatel_2$:
\begin{align*}
   (\spatel_2-\bspatel)_N &= N+\spatel_2-\bspatel = N + \spatel_2 - [\spatel_1-N(\nblocks-1)]\\
& = \spatel_2 - \spatel_1 + N\nblocks = \dif  \in \Delta_N(\bspat).
\end{align*}
\end{itemize}
Therefore, we have shown that $\dif \in \Delta_N(\bspat)$ for all
$\dif =0,\ldots,N-1$, which means that $\bspat$ is a circular sparse ruler.

\section{Proof of \thref{prop:3}}
\label{sec:pprop:3}
If we form the matrix $\bm R$ in \thref{prop:rmat}  using the matrices from
\eqref{eq:bsetsbdef}, we conclude that $\{0,\ldots,\sndiag\}\subset
\Delta(\spat)$ in order for $\spat$ to define a covariance sampler. As
we did to prove \thref{prop:psr}, we write the following necessary and sufficient
condition:
\begin{align}
&\forall \dif \in \{0,\ldots, \sndiag\},~
\exists \bspatel_\asindo,\bspatel_\asindt\in \bspat~\text{and}~~\blockind_\asindo,\blockind_\asindt\in\nonumber
\{0,\ldots,\nblocks-1\}\\&~~~~~~~\text{such that}~\bspatel_\asindt-\bspatel_\asindo+(\blockind_\asindt-\blockind_\asindo)N = \dif.\label{eq:condx}
\end{align}

We start by showing that if $\bspat$ is a circular sparse ruler, then
\eqref{eq:condx} holds true, i.e., $\spat$ is a covariance
sampler. More specifically, we show that $\dif\in \Delta(\spat)~\forall
\dif\in \{0,\ldots,N(\nblocks-1)\}$.  Consider two different cases:
\begin{itemize}
\item Case $0\leq \dif <N(\nblocks-1)$: It suffices to write $\dif$ as
  $\dif = \bspatel_\dif + \blockind_\dif N$, with $\bspatel_\dif\in
  \{0,\ldots,N-1\}$ and
  $\blockind_\dif\in\{0,\ldots,\nblocks-2\}$. Since $\bspatel_\dif\in
  \Delta_N(\bspat)$, then $\bspatel_\dif$ can be represented either as
  $\bspatel_{\dif,\asindt}-\bspatel_{\dif,\asindo}$ or as
  $N+\bspatel_{\dif,\asindt}-\bspatel_{\dif,\asindo}$, where
  $\bspatel_{\dif,\asindo},\bspatel_{\dif,\asindt}\in\bspat$. In the
  former case just make $\bspatel_\asindt = \bspatel_{\dif,\asindt}$,
  $\bspatel_\asindo = \bspatel_{\dif,\asindo}$, $\blockind_\asindt =
  \blockind_\dif $ and $\blockind_\asindo = 0$. In the later case make
  $\bspatel_\asindt = \bspatel_{\dif,\asindt}$, $\bspatel_\asindo =
  \bspatel_{\dif,\asindo}$, $\blockind_\asindt = \blockind_\dif +1$
  and $\blockind_\asindo = 0$.
\item Case $\dif  = N(\nblocks-1)$: this is trivial since $N(\nblocks-1)\in
  \Delta(\spat)$ for any non-empty choice of $\bspat$. 
\end{itemize}

Now, in order to show the converse theorem, we prove that if
$\intset{0}{N-1}\subset \Delta(\spat)$, then $\intset{0}{N-1}\subset
\Delta_N(\bspat)$. Let us consider some $\dif \in
\intset{0}{N-1}$. Since $\dif \in \Delta(\spat)$, it is clear that
there exist some $\bspatel_\asindo,\bspatel_\asindt\in \bspat$ and
$\blockind_\asindo,\blockind_\asindt\in\{0,\ldots,\nblocks-1\}$ such that
$\bspatel_\asindt-\bspatel_\asindo+(\blockind_\asindt-\blockind_\asindo)N = \dif
$. In particular, $(\blockind_\asindt-\blockind_\asindo)$ can be either 0 or
1. Therefore, for any $\dif \in \intset{0}{N-1}$, there exists
$\bspatel_\asindo,\bspatel_\asindt\in \bspat$ such that either $ \bspatel_\asindt
-\bspatel_\asindo=\dif $ or $N+\bspatel_\asindt-\bspatel_\asindo=\dif
$. Noting that this condition is equivalent to the condition
$\intset{0}{N-1}\subset \Delta_N(\bspat)$ concludes the proof.

\bibliographystyle{IEEEbib}
\bibliography{my_bibliography}

\end{document}